\documentclass{amsart}
\usepackage{amsfonts}
\usepackage{amsmath}
\setcounter{MaxMatrixCols}{30}
\usepackage{amssymb}
\usepackage{graphicx}
\newtheorem{theorem}{Theorem}
\theoremstyle{plain}

\newtheorem{conjecture}{Conjecture}

\newtheorem{definition}{Definition}

\newtheorem{lemma}{Lemma}

\newtheorem{proposition}{Proposition}
\newtheorem{remark}{Remark}

\numberwithin{equation}{section}

\newcommand{\Le}[1]{\mathfrak{Le}_{#1}(x)}
\newcommand{\seq}[1]{\{#1\}_{k=0}^{\infty}}
\newcommand{\R}{\mathbb{R}}
\newcommand{\C}{\mathbb{C}}
\newcommand{\cms}{~ classical multiplier sequence~}

\newcommand{\g}[1]{g_{#1}^*}

\title{The non-existence of cubic Legendre multiplier sequences}

\author{Tam\'as Forg\'acs${}^{\dag}$, James Haley${}^{\ddag}$, Rebecca Menke${}^{\S}$, and Carlee Simon${}^{\star}$}
\thanks{Research partially supported by NSF grant DMS-1156273. Some of the work was completed while the first author was on sabbatical leave at the University of Hawai\textquoteleft i at Manoa, whose support he gratefully acknowledges.}

\begin{document}

\maketitle
\begin{abstract} The main result in this paper is the proof of the recently conjectured non-existence of cubic Legendre multiplier sequences. We also give an alternative proof of the non-existence of linear Legendre multiplier sequences using a method that will allow for a more methodical treatment of sequences interpolated by higher degree polynomials. \\
{\bf MSC 30C15, 26C10}\\
Keywords: Legendre multiplier sequences, symbol of a linear operator, coefficients of Legendre-diagonal differential operators 
\end{abstract}
\section{Introduction}
\smallbreak

Given a simple set of polynomials $Q=\seq{q_k(x)}$ and a sequence of numbers $\seq{\gamma_k}$, one can define the operator associated with $\seq{\gamma_k}$ as $T[q_k(x)]=\gamma_k q_k(x)$ for $k=0,1,2,\ldots$ and extend its action to $\R[x]$ linearly. Our work in this paper concerns such operators when $Q$ consists of the Legendre polynomials.
\bigbreak

\begin{definition}
The Legendre polynomials $\Le{k}$ are defined by the following generating relation
\begin{equation*}
\displaystyle{\frac{1}{\sqrt{1-2xt+t^2}}} = \displaystyle \sum \limits _{k=0}^\infty \mathfrak{Le}_k(x)t^k,
\end{equation*}
where the square root denotes the branch which goes to 1 as $t \to 0$. 
\end{definition}

\begin{definition}
A sequence of real numbers, $\seq{\gamma_k}$, is a Legendre multiplier sequence if $\displaystyle{\sum \limits _{k=0}^n a_k\gamma_k\mathfrak{Le}_k(x)}$ has only real zeros whenever $\displaystyle{\sum \limits _{k=0}^n a_k\mathfrak{Le}_k(x)}$ has only real zeros. We define $Q$-multiplier sequences for any basis $Q$ of $\R[x]$ analogously. If $Q$ is the standard basis, the associated multiplier sequences are called classical multiplier sequences (of the first kind).
\end{definition}
 We point out that every sequence of the form $\{0,0,0,\ldots, a,b,\ldots, 0,0,0,\ldots\}$, where $a,b \in \R$, is a Legendre multiplier sequence. The literature calls such sequences \emph{trivial}. In addition to these, there is an abundance of \emph{non-trivial} Legendre multiplier sequences (see \cite{Blakeman} for examples). Thus, the problem of characterizing polynomials which interpolate Legendre multiplier sequences is a meaningful one, and it fits well into the landscape of current research in the theory of multiplier sequences (see for example \cite{Blakeman}, \cite{bo}, \cite{tomandrzej2}, \cite{yoshi}). The present paper contributes to this line of inquiry by settling a conjecture on the non-existence of cubic Legendre multiplier sequences (Open problem (1) in \cite{Blakeman}). In addition, we give a new proof of the non-existence of linear Legendre multiplier sequences, which is more methodical than the educated hunt for test polynomials whose zeros fail to remain real after having been acted on by a linear sequence. 
\\  
\indent The rest of the paper is organized as follows. In Section \ref{background} we present a number of known results which are relevant to our investigations. Section \ref{linears} exhibits a new proof of the non-existence of linear Legendre multiplier sequences (Proposition 2 in \cite{Blakeman}) using a theorem of Borcea and Br\"and\'en. Our method exploits the fact that one does not need to have full knowledge of all coefficient polynomials $T_k(x)$ of a linear operator $\displaystyle{T=\sum_{k=0}^{\infty}T_k(x)D^k}$ in order to decide whether or not $T$ is reality preserving. Section \ref{cubics} contains the main result, Theorem \ref{nocubics}, which establishes the non-existence of cubic Legendre multiplier sequences. We conclude with a section on open problems.

\vspace{.25in}
\section{Background}\label{background}
\smallbreak
 
Central to the theory of (classical) multiplier sequences is the Laguerre-P\'olya class of real entire functions, which we denote by $\mathcal{L-P}$. We recall the definition here, along with a recent theorem characterizing this class as precisely those real entire functions which satisfy the generalized Laguerre inequalities.
\begin{definition}
A real entire function $\displaystyle{\varphi(x)=\sum_{k=0}^{\infty} \frac{\gamma_k}{k!}x^k}$ is said to belong to the Laguerre-P\'olya class, written $\varphi \in \mathcal{L-P}$, if it can be written in the form
\[
\varphi(x)=c x^m e^{-ax^2+bx} \prod_{k=1}^{\omega} \left(1+\frac{x}{x_k} \right) e^{-x/x_k},
\]
where $b,c \in \R$, $x_k \in \R \setminus \{ 0\}$, $m$ is a non-negative integer, $a\geq 0$, $0 \leq \omega \leq \infty$ and $\displaystyle{\sum_{k=1}^{\omega} \frac{1}{x_k^2} < +\infty}$. If $\gamma_k \geq 0$ for all $k=0,1,2,\ldots$ we say that $\varphi \in \mathcal{L-P}^+$. 
\end{definition}
Csordas and Vishnyakova recently completed the following characterization of the class $\mathcal{L-P}$.
\begin{theorem}\label{cvish}(\cite[Theorem 2.9]{cvarga} and \cite[Theorem 2.3]{cv}) Let $\varphi(x)$ denote a real entire function, $\varphi(x) \not\equiv 0$. Then $\varphi \in \mathcal{L-P}$ if and only if for all $n \in \mathbb{N}_0$ and for all $x \in \R$
\[
L_n(x,\varphi):=\sum_{j=0}^{2n} \frac{(-1)^{j+n}}{(2n)!}\binom{2n}{j} \varphi^{(j)}(x)\varphi^{(2n-j)}(x) \geq 0.
\]
\end{theorem}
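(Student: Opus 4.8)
The plan is to route everything through a single algebraic identity that turns the quantities $L_n(x,\varphi)$ into Taylor coefficients of $|\varphi|$ along vertical lines. Expanding $\varphi(x+u)$ and $\varphi(x-u)$ about $x$ and multiplying, one checks that the coefficient of $u^{2n}$ in the entire function $u\mapsto\varphi(x+u)\varphi(x-u)$ (which is visibly even in $u$) equals $(-1)^n L_n(x,\varphi)$; substituting $u=iy$ and using that $\varphi$ is real, so $\overline{\varphi(x+iy)}=\varphi(x-iy)$, yields
\[
|\varphi(x+iy)|^2 \;=\; \sum_{n=0}^{\infty} L_n(x,\varphi)\,y^{2n} \qquad (x,y\in\R).
\]
Thus $L_n(x,\varphi)\ge 0$ for all $n\in\mathbb{N}_0$ and all $x\in\R$ is exactly the assertion that, for each fixed real $x$, the entire function $y\mapsto|\varphi(x+iy)|^2$ has only nonnegative Taylor coefficients — in particular is nondecreasing in $|y|$. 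The theorem is then a dictionary between $\mathcal{L-P}$ and this monotonicity of the modulus along vertical lines, and I would prove the two implications separately.

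For $\varphi\in\mathcal{L-P}\Rightarrow L_n(x,\varphi)\ge 0$, I would substitute $z=x+iy$ into the Hadamard product in the definition of $\mathcal{L-P}$. Using $|e^{w}|^2=e^{2\mathrm{Re}\,w}$ this gives
\[
|\varphi(x+iy)|^2 \;=\; |c|^2 e^{2bx-2ax^2}\,(x^2+y^2)^m\,e^{2ay^2}\prod_{k=1}^{\omega}\Bigl[\bigl(1+x/x_k\bigr)^2+y^2/x_k^2\Bigr]e^{-2x/x_k}.
\]
The prefactor $|c|^2 e^{2bx-2ax^2}$ is nonnegative, and every remaining factor is a power series in $y^2$ with nonnegative coefficients — the hypothesis $a\ge 0$ is precisely what is needed for $e^{2ay^2}$, and $(1+x/x_k)^2\ge 0$ for the zero factors. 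Since a locally uniformly convergent product of power series with nonnegative coefficients again has nonnegative coefficients, we obtain $L_n(x,\varphi)\ge 0$; for $n=0$ this is $\varphi(x)^2\ge 0$, and for $n=1$ it recovers the classical Laguerre inequality $(\varphi')^2\ge\varphi\varphi''$.

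The substance is the converse: assuming $L_n(x,\varphi)\ge 0$ for all $n$ and $x$, prove $\varphi\in\mathcal{L-P}$. I would argue in three stages. \emph{(i) Only real zeros.} If $\varphi(x_0+iy_0)=0$ with $y_0\ne 0$, then $y\mapsto|\varphi(x_0+iy)|^2$ is nonnegative and nondecreasing on $[0,\infty)$ yet vanishes at $|y_0|$, hence vanishes on all of $[0,|y_0|]$; being real-analytic in $y$ it vanishes identically, forcing $\varphi\equiv 0$, a contradiction. \emph{(ii) A priori growth control}, namely that $\varphi$ has order at most $2$ and that the quadratic term in the exponent of its Hadamard factorization has nonpositive leading coefficient (a genuine Gaussian $e^{-ax^2}$ with $a\ge 0$, not an anti-Gaussian). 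This is the main obstacle, and it is where the higher inequalities ($n\ge 2$) are indispensable: $L_1\ge 0$ alone only says that $\log|\varphi|$ is concave on the intervals between consecutive real zeros, which does not even force all zeros to be real (e.g.\ $\cos(x^2)$ satisfies it) and yields no bound on the order. I would try to extract the bound by combining this concavity with the pole and positive‑residue structure of $\varphi'/\varphi$ and with the estimates $|\varphi(x+iy)|^2=\sum_n L_n(x,\varphi)y^{2n}\ge\varphi(x)^2$ (and their differentiated analogues), aiming to force $\varphi'/\varphi$ into the partial-fraction representation of genus at most $1$ characteristic of members of $\mathcal{L-P}$; securing this unconditionally, rather than assuming the growth outright as in earlier treatments, is the technical heart of the Csordas--Vishnyakova argument. \emph{(iii) Conclusion.} Once the factorization is known to have the correct shape, the sign constraint on the Gaussian coefficient is read off from the behaviour of $L_n$ as $n\to\infty$, and one concludes $\varphi\in\mathcal{L-P}$.
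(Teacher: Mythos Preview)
The paper does not contain a proof of this theorem: it is quoted as a background result and attributed to \cite[Theorem~2.9]{cvarga} and \cite[Theorem~2.3]{cv}. There is therefore no ``paper's own proof'' to compare your proposal against.

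As to the proposal itself: the generating identity $|\varphi(x+iy)|^2=\sum_{n\ge 0}L_n(x,\varphi)\,y^{2n}$ is correct, and your proof of the implication $\varphi\in\mathcal{L-P}\Rightarrow L_n\ge 0$ via the Hadamard product is essentially the standard one (this direction is already in \cite{cvarga}). For the converse, step (i) is fine, but step (ii) is not a proof: you explicitly write that you ``would try to extract the bound'' and call this ``the technical heart of the Csordas--Vishnyakova argument'' without supplying it. That is the whole point of \cite{cv}---earlier treatments assumed finite order, and showing unconditionally that the generalized Laguerre inequalities force order $\le 2$ (with the correct sign on the Gaussian factor) is exactly the new content there. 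So your write-up correctly identifies the architecture and the forward direction, but as a proof of the full theorem it has a genuine gap at the growth step, which you yourself flag.
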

We shall make use of this theorem in Section \ref{linears} when we reprove the non-existence of linear Legendre multiplier sequences. Since $\mathcal{L-P}$ is exactly the class of real entire functions which are locally uniform limits on $\C$ of real polynomials with only real zeros (see \cite[Ch.VIII]{levin}, or \cite[Satz 3.2]{obre}), it is closed under differentiation. Thus if $\varphi \in \mathcal{L-P}$, then 
\[
L_1(x,\varphi^{(k)}(x)) \geq 0 \qquad \forall k \in \mathbb{N}_0.
\]

In 1914, P\'olya and Schur completely characterized classical multiplier sequences. Their seminal theorem  maintains relevance in the setting of Legendre multiplier sequences, since every Legendre multiplier sequence must also be a classical multiplier sequence (see \cite[Theorem 8]{Blakeman} together with \cite[Proposition 118]{andrzej}). We note that if $\seq{\gamma_k}$ is a classical multiplier sequence, then either $\seq{\gamma_k}$, $\seq{-\gamma_k}$, $\seq{(-1)^k\gamma_k}$ or $\seq{(-1)^{k+1}\gamma_k}$ is a sequence of non-negative terms \cite[p.90]{Polya}.  Since $\seq{-1}$ and $\seq{(-1)^k}$ are both classical multiplier sequences, it suffices to consider only sequences of non-negative terms when characterizing classical multiplier sequences.

\begin{theorem}\label{Polya}
(P\'olya - Schur, \cite{Polya})
Let $\seq{\gamma _k}$ be a sequence of non-negative real numbers. The following are equivalent:
\begin{enumerate}
\item $\seq{\gamma _k}$ is a classical multiplier sequence.
\item For each $n$, the polynomial $T[(1+x)^n]:= \displaystyle \sum_{k=0}^{n} {n\choose k} \gamma _k x^k\in \mathcal{L-P^+}$.
\item $T[e^x]:= \displaystyle \sum_{k=0}^{n} \frac{\gamma _k}{k!} x^k\in \mathcal{L-P^+}$.
\end{enumerate}
\end{theorem}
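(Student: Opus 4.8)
The plan is to prove the cycle of implications $(1)\Rightarrow(2)\Rightarrow(3)\Rightarrow(1)$, reserving the bulk of the work for the last, which I would split as $(3)\Rightarrow(2)\Rightarrow(1)$. The implication $(1)\Rightarrow(2)$ is immediate: the polynomial $(1+x)^n$ has only real zeros, so the multiplier-sequence hypothesis forces $T[(1+x)^n]=\sum_{k=0}^n\binom nk\gamma_k x^k$ to have only real zeros as well; since all its coefficients are non-negative it has no zero in $(0,\infty)$, so every zero lies in $(-\infty,0]$. Having only real zeros and non-negative coefficients, $T[(1+x)^n]$ therefore lies in $\mathcal{L-P}^+$.

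For $(2)\Rightarrow(3)$ I would use a scaling-and-limit argument together with the fact, recalled in Section~\ref{background}, that $\mathcal{L-P}$ (and hence $\mathcal{L-P}^+$) is closed under locally uniform limits on $\C$. By dilation invariance the polynomials
\[
Q_n(x):=\sum_{k=0}^n\binom nk\gamma_k\Bigl(\frac{x}{n}\Bigr)^k
\]
all lie in $\mathcal{L-P}^+$, and their $k$-th Maclaurin coefficients, $\frac{n(n-1)\cdots(n-k+1)}{n^k}\cdot\frac{\gamma_k}{k!}$, converge to $\frac{\gamma_k}{k!}$; thus $Q_n\to T[e^x]$ coefficientwise. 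To upgrade this to locally uniform convergence, I would apply Newton's inequalities to the real-rooted polynomial $T[(1+x)^n]$ — whose $\binom nk$-normalized coefficients are precisely the $\gamma_k$ — to obtain $\gamma_k^2\ge\gamma_{k-1}\gamma_{k+1}$; after disposing of the degenerate cases ($\{\gamma_k\}=\{0\}$, or sequences with vanishing initial terms, which reduce to the former by factoring out a power of $x$), this log-concavity gives $\gamma_k\le\gamma_0(\gamma_1/\gamma_0)^k$, so $\sum\gamma_k x^k/k!$ is entire, the coefficients of $Q_n$ are dominated by those of this series, the convergence is locally uniform, and $T[e^x]\in\mathcal{L-P}^+$.

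The implication $(3)\Rightarrow(1)$ carries the weight of the theorem. Assuming $T[e^x]\in\mathcal{L-P}^+$, the plan is first to recover $(2)$ — that every section $T[(1+x)^m]=\sum_{k=0}^m\binom mk\gamma_k x^k$ lies in $\mathcal{L-P}^+$ — and then, given any real-rooted $p(x)=\sum_{k=0}^m a_k x^k$ of degree $m$, to invoke the Schur--Szeg\H{o} composition theorem: writing $p(x)=\sum_k\binom mk b_k x^k$ and $T[(1+x)^m]=\sum_k\binom mk c_k x^k$ (so $b_k=a_k/\binom mk$ and $c_k=\gamma_k$), the hypothesis of that theorem is met because $T[(1+x)^m]\in\mathcal{L-P}^+$ has all zeros in $(-\infty,0]$, and its conclusion is that $\sum_k\binom mk b_k c_k x^k=\sum_k a_k\gamma_k x^k=T[p]$ has only real zeros; hence $\{\gamma_k\}$ is a classical multiplier sequence. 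For the step $(3)\Rightarrow(2)$ itself, I would approximate $T[e^x]$ locally uniformly on $\C$ by real polynomials $\Phi_j$ with only non-positive zeros (possible since $T[e^x]\in\mathcal{L-P}^+$), show that the finite sequences $k!\,[x^k]\Phi_j$ satisfy the analogue of $(2)$, and pass to the limit in the location of the zeros of the sections; alternatively one can run P\'olya and Schur's original finite-section (Jensen-polynomial) argument.

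The main obstacle is exactly the step $(3)\Rightarrow(2)$ — deducing the reality of every finite section $T[(1+x)^n]$ from the single analytic statement $T[e^x]\in\mathcal{L-P}^+$ — together with the invocation of the Schur--Szeg\H{o} composition theorem in $(2)\Rightarrow(1)$; both are genuinely non-trivial, whereas the remaining steps ($(1)\Rightarrow(2)$, the scaling limit, and the composition reduction) are routine once those two ingredients are available.
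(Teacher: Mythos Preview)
The paper does not prove this theorem at all: it is quoted as the classical 1914 result of P\'olya and Schur, with a citation to \cite{Polya} and no argument given. So there is no ``paper's own proof'' to compare against.

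On its own merits, your outline is the standard route and is essentially correct. The implications $(1)\Rightarrow(2)$ and $(2)\Rightarrow(3)$ are handled cleanly; your use of Newton's inequalities (valid for every $n$ by hypothesis, hence for every $k$) to force entirety and locally uniform convergence is exactly the right mechanism, and the reduction of $(2)\Rightarrow(1)$ to the Schur--Szeg\H{o} composition theorem is the classical device. The one place your write-up is still a sketch rather than a proof is, as you yourself flag, $(3)\Rightarrow(2)$: saying ``approximate $T[e^x]$ by polynomials $\Phi_j$ with non-positive zeros, show the finite sequences $k!\,[x^k]\Phi_j$ satisfy the analogue of $(2)$, and pass to the limit'' hides the real work. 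You would need to verify that for a \emph{polynomial} $\Phi(x)=c\prod_i(1+x/\alpha_i)$ with $\alpha_i>0$ the associated Jensen polynomials $\sum_{k}\binom nk \Phi^{(k)}(0)x^k$ are real-rooted (this is where the multiplicative structure is exploited, e.g.\ via repeated application of the Schur--Szeg\H{o} lemma or the Laguerre/Hermite--Poulain theorem), and then argue that locally uniform convergence of the $\Phi_j$ forces convergence of the corresponding degree-$n$ Jensen polynomials, so Hurwitz applies. None of this is wrong, but it is the substantive content of the theorem and deserves to be written out rather than gestured at.
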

Similarly to the classical setting, we may consider only sequences of non-negative terms when investigating (linear and cubic) Legendre multiplier sequences, by virtue of $\seq{(-1)^k}$ also being a Legendre multiplier sequence (\cite[Theorem 12]{Blakeman}).

\indent We conclude this section by a theorem of Borcea and Br\"and\'en, which characterizes reality preserving linear operators $T: \R[x] \to \R[x]$ in terms of their symbol $G_T(x,y)$. In order to be able to state their result (cf. Theorem \ref{symbol}), we need to make the following definitions.
\begin{definition}
The symbol of a linear operator $T:\mathbb{R}[x]\rightarrow\R[x]$ is the formal power series defined by
\begin{equation*}
G_T(x,y):= \sum_{n=0}^{\infty}\frac{(-1)^nT(x^n)}{n!}y^n.
\end{equation*}
\end{definition}
\begin{definition} A real polynomial $p \in \R[x,y]$ is called stable, if $p(x,y) \neq 0$ whenever Im$(x)>0$ and Im$(y)>0$. The Laguerre-P\'olya class of real entire functions in two variables, denoted by $\mathcal{L-P}_2(\R)$, is the set of real entire functions in two variables, which are locally uniform limits in $\mathbb{C}^2$ of real stable polynomials. 
\end{definition}

\bigbreak

\begin{theorem}\label{symbol}
(Borcea and Br\"{a}nd\'{e}n, 2009)
A linear operator $T:\mathbb{R}[x]\rightarrow\mathbb{R}[x]$ preserves the reality of zeros if and only if
\begin{enumerate}
\item The rank of $T$ is at most two and $T$ is of the form
\begin{equation*}
T(P) = \alpha(P)Q+\beta(P)R,
\end{equation*}
where $\alpha,\beta:\mathbb{R}[x]\rightarrow\mathbb{R}$ are linear functionals and $Q+iR$ is a stable polynomial, or;
\item $G_T(x,y) \in \mathcal{L-P}_2(\mathbb{R})$, or;
\item $G_T(-x,y) \in \mathcal{L-P}_2(\mathbb{R})$.
\end{enumerate}
\end{theorem}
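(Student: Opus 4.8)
This is the characterization theorem of Borcea and Br\"and\'en, so I only sketch the strategy of their argument. The plan is to first isolate the case $\dim T(\R[x]) \le 2$ and then treat the two implications of the characterization by different means. If $\dim T(\R[x]) \le 2$, then $T(P) = \alpha(P)Q + \beta(P)R$ holds automatically for a basis $\{Q,R\}$ of the range and linear functionals $\alpha,\beta : \R[x] \to \R$; using that $T$ preserves reality, one can arrange the basis so that $Q$ and $R$ have interlacing real zeros, equivalently so that $Q + iR$ is stable, which is alternative (1). From here on I would assume $T$ does not have this rank-at-most-two form, so that one expects alternative (2) or (3). Note also that precomposing $T$ with $x \mapsto -x$ preserves real-rootedness and interchanges alternatives (2) and (3), so it suffices to work with $G_T(x,y)$.

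For the sufficiency direction, suppose $G_T(x,y) \in \mathcal{L-P}_2(\R)$. Let $p \in \R[x]$ have degree $N$ and only real zeros, and pass to its polarization $\widetilde{p}(x_1,\dots,x_N)$, the unique multiaffine symmetric polynomial with $\widetilde{p}(x,\dots,x) = p(x)$; by the Grace--Walsh--Szeg\H{o} coincidence theorem $\widetilde{p}$ is stable. Reconstructing $T$ from its symbol through the relation $T[x^k] = (-1)^k k!\,[y^k]\,G_T(x,y)$, one realizes $T[p]$ as a diagonalization of a pairing of $G_T$ against $\widetilde{p}$. Since products, specializations and diagonalizations of stable polynomials are again stable, and since $\mathcal{L-P}_2(\R)$ is by definition the closure of the real stable polynomials (so that Hurwitz's theorem applies to the limits), one concludes that $T[p] \in \mathcal{L-P}$; in particular $T[p]$ has only real zeros.

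For the necessity direction --- which I expect to be the main obstacle --- suppose $T$ preserves real-rootedness and is not of the rank-at-most-two form. For each $n \in \mathbb{N}$ the bivariate polynomial $(1 - xy/n)^n$ is stable, because when $\operatorname{Im} x > 0$ and $\operatorname{Im} y > 0$ the product $xy$ cannot be a positive real number, so $1 - xy/n \neq 0$. The crucial technical step is to upgrade the univariate hypothesis on $T$ to the assertion that the operator $T \otimes \operatorname{id}$, acting only in the variable $x$, preserves bivariate stability; this is again achieved via Grace--Walsh--Szeg\H{o}, and it is precisely here that excluding the degenerate form of (1) is needed, in order to keep control of degrees and prevent the construction from collapsing. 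Granting this, $T[(1 - xy/n)^n]$ is stable for every $n$ (after possibly one global sign change, which accounts for alternative (3)). Letting $n \to \infty$ and using $(1 - xy/n)^n \to e^{-xy}$ locally uniformly on $\C^2$, we obtain
\[
T\big[(1 - xy/n)^n\big] = \sum_{k=0}^{n} \binom{n}{k} \frac{(-1)^k}{n^k}\, T[x^k]\, y^k \;\longrightarrow\; \sum_{k=0}^{\infty} \frac{(-1)^k\, T[x^k]}{k!}\, y^k = G_T(x,y) \qquad (n \to \infty)
\]
locally uniformly, so by Hurwitz's theorem $G_T$ (or $G_T(-x,y)$, depending on the sign) is either identically zero or nonvanishing in the relevant region, hence lies in $\mathcal{L-P}_2(\R)$. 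This is alternative (2) or (3), and the characterization follows. The heart of the difficulty is thus the Grace--Walsh--Szeg\H{o} lift in the necessity direction together with the attendant bookkeeping on degrees; the sufficiency direction, by contrast, is a comparatively soft closure argument.
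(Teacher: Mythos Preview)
The paper does not prove this theorem at all: it is quoted as a background result due to Borcea and Br\"and\'en (2009), with no argument given in the text. So there is nothing in the paper to compare your proposal against. Your sketch is a reasonable outline of the original Borcea--Br\"and\'en strategy (rank $\le 2$ case handled separately via interlacing/Hermite--Biehler; sufficiency via polarization and Grace--Walsh--Szeg\H{o}; necessity via the stable approximants $(1-xy/n)^n \to e^{-xy}$ together with a lift of the univariate hypothesis to bivariate stability), but as you yourself note, the crucial step---upgrading ``$T$ preserves real-rootedness'' to ``$T\otimes\mathrm{id}$ preserves bivariate stability''---is precisely where the real work lies, and your write-up only gestures at it. If your goal were an actual proof rather than a sketch, that step (and the handling of the sign dichotomy that produces alternative (3)) would need to be filled in carefully.
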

\indent  In the remainder of this paper we follow the literature by using the notation $T=\seq{\gamma_k}$ to indicate the dual interpretation of a sequence as a linear operator and vica versa.
\section{Linear Legendre sequences}\label{linears}
We now reprove the non-existence of linear Legendre multiplier sequences (see \cite[Proposition 2]{Blakeman}). Although the result is known, our proof is novel, and has the promise of being suitable for use when investigating $Q$-multiplier sequences in larger generality. The following definition and three lemmas serve as setup for Theorem \ref{linear}. 
\begin{definition}\label{Defhyper} We define a generalized hypergeometric function by
\begin{equation}\label{hyperdef}
{}_pF_q\left[\begin{array}{rrrrr} a_1, &a_2,&\ldots&a_p;& \\  & & & &x\\ b_1,&b_2,&\ldots,&b_q; \end{array} \right]:=1+\sum_{n=1}^{\infty} \frac{\prod_{i=1}^p (a_i)_n}{\prod_{j=1}^q (b_j)_n}\frac{x^n}{n!},
\end{equation}
where $(\alpha)_n=\alpha(\alpha+1) \cdots (\alpha+n-1)$ denotes the rising factorial. 
\end{definition}
The convergence properties of the series on the right hand side of equation (\ref{hyperdef}) are discussed in detail in \cite[Ch.5]{rainville}. Here we mention that if $p=3$ and $q=2$, then the series is absolutely convergent on $|x|=1$ if
\[
\Re\left(\sum_{j=1}^q b_j-\sum_{i=1}^p a_i\right)>0.
\]  
\begin{lemma}\label{hypergeometric} The generalized hypergeometric function
\[
{}_3F_2\left[\begin{array}{rrrr} -\frac{1}{2}, &-n, &\frac{1}{2}+n;& \\ & & &-x\\ & \frac{1}{4},& \frac{3}{4}; \end{array} \right]
\]
converges at $x=-1$ and satisfies the equation
\[
{}_3F_2\left[\begin{array}{rrrr} -\frac{1}{2}, &-n, &\frac{1}{2}+n;& \\ & & &1\\ & \frac{1}{4},& \frac{3}{4}; \end{array} \right]=4n+1 \qquad \forall n \geq 1.
\]
\end{lemma}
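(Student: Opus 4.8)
The plan is to dispatch convergence first and then establish the evaluation. Convergence at $x=-1$ is automatic: since $n\ge 1$, the numerator parameter $-n$ truncates the series, so the ${}_3F_2$ is in fact a polynomial in $x$ of degree $n$; alternatively, one may invoke the criterion recorded after Definition \ref{Defhyper}, since here $\sum b_j-\sum a_i=\left(\tfrac14+\tfrac34\right)-\left(-\tfrac12-n+\tfrac12+n\right)=\tfrac32>0$. Thus the content of the lemma is the identity $S_n=4n+1$, where $S_n:=\sum_{k=0}^{n}\frac{(-\tfrac12)_k(-n)_k(\tfrac12+n)_k}{(\tfrac14)_k(\tfrac34)_k\,k!}$.

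The idea is to reverse the order of summation. Substituting $k\mapsto n-k$ and applying the standard reflection formula $(a)_{n-k}=\dfrac{(-1)^k(a)_n}{(1-a-n)_k}$ to each Pochhammer symbol, the factors $n!$ cancel and, after collecting the terms that do not depend on the new summation index, one obtains
\[
S_n=(-1)^n\,\frac{(-\tfrac12)_n(\tfrac12+n)_n}{(\tfrac14)_n(\tfrac34)_n}\;{}_3F_2\!\left[\begin{array}{rrrr}-n,&\tfrac14-n,&\tfrac34-n;&\\ & & &1\\ &\tfrac32-n,&\tfrac12-2n;&\end{array}\right].
\]
The crucial observation is that this new series is \emph{Saalschützian}: it terminates, and the sum of its lower parameters, $2-3n$, equals $1$ plus the sum of its upper parameters, $1+(1-3n)$. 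Consequently the classical Pfaff--Saalschütz summation theorem (see e.g. \cite[Ch.~5]{rainville}) applies and evaluates the series as $\dfrac{(\tfrac54)_n(\tfrac34)_n}{(\tfrac32-n)_n(\tfrac12+n)_n}$.

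It then remains only to simplify. Inserting this value and cancelling the common factors $(\tfrac34)_n$ and $(\tfrac12+n)_n$ leaves $S_n=(-1)^n\dfrac{(-\tfrac12)_n(\tfrac54)_n}{(\tfrac14)_n(\tfrac32-n)_n}$, and two elementary identities finish the argument: $\dfrac{(\tfrac54)_n}{(\tfrac14)_n}=\dfrac{\tfrac14+n}{\tfrac14}=4n+1$, while reversing the product gives $(\tfrac32-n)_n=\prod_{m=0}^{n-1}\left(\tfrac12-m\right)=(-1)^n(-\tfrac12)_n$, so that $\dfrac{(-\tfrac12)_n}{(\tfrac32-n)_n}=(-1)^n$. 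Hence $S_n=(-1)^n(4n+1)(-1)^n=4n+1$. The one step I expect to require genuine insight is recognizing that the summation reversal turns the original (non-balanced) ${}_3F_2$ into a balanced one — exactly the hypothesis needed for Pfaff--Saalschütz; the remaining work, namely checking that none of the Pochhammer symbols appearing in the reflection vanishes for $0\le k\le n$ and tracking the cancellations, is routine manipulation of half-integer Pochhammer symbols.
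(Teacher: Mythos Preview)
Your argument is correct. One trivial slip: in the convergence line you compute $\sum b_j-\sum a_i=\tfrac32$, but $(1/4+3/4)-(-1/2-n+1/2+n)=1$; this is harmless since the series terminates anyway.

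The paper's proof and yours are closer than they look. The paper simply quotes Theorem~30 of Rainville,
\[
{}_3F_2\!\left[\begin{array}{c}\tfrac12+\tfrac12 a-b,\ -n,\ a+n\\[2pt]1+a-b,\ \tfrac12 a+\tfrac12\end{array};1\right]=\frac{(b)_n}{(1+a-b)_n},
\]
with $a=\tfrac12$, $b=\tfrac54$, obtaining $(5/4)_n/(1/4)_n=4n+1$ in one line. That identity is itself a Saalsch\"utzian ${}_3F_2$ (the lower parameters exceed the upper by~$1$), so it is nothing other than Pfaff--Saalsch\"utz in a convenient parametrization. In fact the \emph{original} series ${}_3F_2[-\tfrac12,-n,\tfrac12+n;\tfrac14,\tfrac34;1]$ is already balanced ($\sum b_j=1=1+\sum a_i$), so Pfaff--Saalsch\"utz applies directly without your reversal: taking $c=\tfrac14$, $\alpha=-\tfrac12$, $\beta=\tfrac12+n$ gives
\[
\frac{(3/4)_n\,(-1/4-n)_n}{(1/4)_n\,(1/4-n)_n}=\frac{(5/4)_n}{(1/4)_n}=4n+1.
\]
So your summation reversal, while perfectly valid, is an extra detour: it transforms one balanced ${}_3F_2$ into another balanced ${}_3F_2$ and then sums it by the same theorem. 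What your route buys is self-containment---you invoke only the ubiquitous Pfaff--Saalsch\"utz identity rather than a specific numbered theorem from Rainville---at the cost of a bit more bookkeeping with half-integer Pochhammer symbols.
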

\begin{proof} Convergence at $x=-1$ follows from the fact that 
\[
\Re\left(\frac{1}{4}+\frac{3}{4}-\left(-\frac{1}{2}-n+\frac{1}{2}+n \right) \right)=1>0,
\] 
together with the remark after Definition \ref{Defhyper}.
The rest of the claim follows directly from an application of Theorem 30 in \cite{rainville}, which states that for non-negative integers $n$, and $a,b$ independent of $n$ we have
\[
{}_3F_2\left[\begin{array}{rrrr} \frac{1}{2}+\frac{1}{2}a-b, &-n, &a+n;& \\ & & &1\\ & 1+a-b,& \frac{1}{2}a+\frac{1}{2}; \end{array} \right]=\frac{(b)_n}{(1+a-b)_n}.
\]
Setting $\displaystyle{a=\frac{1}{2}}$ and $\displaystyle{b=\frac{5}{4}}$ gives the required result.
\end{proof}
\begin{lemma}\label{appendix} Let $n \in \mathbb{N}^{\geq 1}$, and define
\[
\Psi_n(x):=\sum_{j=1}^n \binom{n}{j} \frac{(2j-2)!}{(j-1)!} \frac{\left(\frac{1}{2}+2j \right)_{n-j}}{\left( \frac{1}{2}\right)_n}x^j.
\]
Then 
\[
{}_3F_2\left[\begin{array}{rrrr} -\frac{1}{2}, &-n, &\frac{1}{2}+n;& \\ & & &-x\\ & \frac{1}{4},& \frac{3}{4}; \end{array} \right]=1-2 \Psi_n(x).
\]
\end{lemma}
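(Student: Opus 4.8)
The plan is to expand the ${}_3F_2$ on the left-hand side directly from Definition \ref{Defhyper}, and match coefficients of $x^j$ with the definition of $\Psi_n(x)$. Writing out the series with parameters $a_1=-\tfrac12$, $a_2=-n$, $a_3=\tfrac12+n$, $b_1=\tfrac14$, $b_2=\tfrac34$ and argument $-x$, the general term is
\[
\frac{(-\tfrac12)_j\,(-n)_j\,(\tfrac12+n)_j}{(\tfrac14)_j\,(\tfrac34)_j}\,\frac{(-x)^j}{j!}.
\]
The term $j=0$ contributes the constant $1$, which matches the $1$ on the right-hand side of the claimed identity, since $\Psi_n$ has no constant term. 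So it remains to show that for each $j\ge 1$ the coefficient of $x^j$ in the ${}_3F_2$ equals $-2$ times the coefficient of $x^j$ in $\Psi_n(x)$, i.e.
\[
(-1)^j\,\frac{(-\tfrac12)_j\,(-n)_j\,(\tfrac12+n)_j}{(\tfrac14)_j\,(\tfrac34)_j\,j!}
=-2\,\binom{n}{j}\,\frac{(2j-2)!}{(j-1)!}\,\frac{\left(\tfrac12+2j\right)_{n-j}}{\left(\tfrac12\right)_n}.
\]

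The key simplifications I would carry out, one Pochhammer factor at a time, are the following. First, the duplication-type identity $(\tfrac14)_j(\tfrac34)_j = (\tfrac12)_{2j}/4^{j}$ (equivalently, the Gauss multiplication formula applied to $\tfrac12$), which turns the two denominator Pochhammers into a single one and cancels a factor $4^j$ against part of the numerator. Second, $(-n)_j = (-1)^j \, n!/(n-j)!$ for $1\le j\le n$ (and $=0$ for $j>n$, consistent with $\binom{n}{j}=0$), which both kills the sign $(-1)^j$ and produces the binomial coefficient $\binom{n}{j}$. Third, $(\tfrac12+n)_j$ combined with the leftover part of $(\tfrac12)_{2j}$ should telescope into $(\tfrac12)_n / (\tfrac12)_n$-type ratios and the tail $(\tfrac12+2j)_{n-j}$: concretely, $(\tfrac12)_{2j} = (\tfrac12)_j(\tfrac12+j)_j$ and $(\tfrac12+n)_j(\tfrac12+2j)_{n-j}\cdot(\text{stuff}) $ rearranges to match $(\tfrac12)_n$ after using $(\tfrac12+j)_{n} = (\tfrac12+j)_{j}\cdot(\text{remaining})$ — the bookkeeping here is where the real work lies. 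Finally, $(-\tfrac12)_j = -\tfrac12\cdot\tfrac12\cdot\tfrac32\cdots(j-\tfrac32) = -\tfrac{(2j-2)!}{2^{2j-1}(j-1)!}$ for $j\ge 1$, which is precisely the source of the $-2$, the $(2j-2)!$, and the $(j-1)!$ on the right-hand side (the remaining powers of $2$ being absorbed by the $4^j$ and the $(\tfrac12)_j$ manipulations above).

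The main obstacle will be the third step: correctly reconciling the three ``long'' Pochhammer symbols $(\tfrac12+n)_j$, $(\tfrac12)_{2j}$, and $(\tfrac12)_n$ together with the tail $(\tfrac12+2j)_{n-j}$ into a clean cancellation, since several of these have lengths depending on both $j$ and $n$. I would handle this by converting everything to ratios of the form $\Gamma(\cdot)/\Gamma(\cdot)$, clearing the common factor $\Gamma(\tfrac12)$, and verifying the resulting $\Gamma$-identity, which reduces to checking that the arguments of the Gamma functions on the two sides agree as multisets; once the $\Gamma$-function identity is established, substituting back the elementary evaluation of $(-\tfrac12)_j$ from the fourth step yields the stated closed form. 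As a consistency check, combining this lemma with Lemma \ref{hypergeometric} (evaluating at $x=1$, i.e. the argument $-x=-1$) forces $1-2\Psi_n(1)=4n+1$, i.e. $\Psi_n(1)=-2n$, which I would verify independently as a sanity check on the coefficient computation.
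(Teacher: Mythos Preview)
Your approach is essentially the paper's: both expand the ${}_3F_2$ term by term and simplify each Pochhammer factor, and your four ``key simplifications'' are precisely the paper's four identities (\ref{easy1})--(\ref{easy4}). The only caveat is that you overestimate the difficulty of the third step: the identity
\[
\frac{\bigl(\tfrac12+n\bigr)_j}{\bigl(\tfrac12\bigr)_{2j}}=\frac{\bigl(\tfrac12+2j\bigr)_{n-j}}{\bigl(\tfrac12\bigr)_n}
\]
is immediate once you cross-multiply and observe that both sides equal $\bigl(\tfrac12\bigr)_{n+j}$, so no passage through Gamma functions is needed. (Also, in your sanity check the signs are reversed: Lemma~\ref{hypergeometric} evaluates the ${}_3F_2$ at argument $1$, which corresponds to $x=-1$ in the present lemma, giving $\Psi_n(-1)=-2n$, not $\Psi_n(1)=-2n$.)
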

\begin{proof} The following identities are readily verified for $0 \leq k \leq n$.
\begin{eqnarray}
(-1)^k\frac{(-n)_k}{k!}&=&\binom{n}{k}; \label{easy1}\\
\left(\frac{1}{4} \right)_k \left(\frac{3}{4} \right)_k&=&\left(\frac{1}{2} \right)_{2k}\frac{1}{2^{2k}}; \label{easy2}\\
2^k \left(-\frac{1}{2} \right)_k&=&-\frac{(2k-2)!}{2^{k-1}(k-1)!}; \qquad \text{and} \label{easy3}\\
\frac{\left(\frac{1}{2}+n \right)_k}{\left(\frac{1}{2} \right)_{2k}}&=&\frac{\left(\frac{1}{2}+2k \right)_{n-k}}{\left(\frac{1}{2} \right)_n}. \label{easy4}
\end{eqnarray}
With these in hand, we may now compute directly.
\begin{eqnarray*}
{}_3F_2\left[\begin{array}{rrrr} -\frac{1}{2}, &-n, &\frac{1}{2}+n;& \\ & & &-x\\ & \frac{1}{4},& \frac{3}{4}; \end{array} \right]&=&\\
&=& \sum_{k=0}^{\infty} \frac{\displaystyle{\left(-\frac{1}{2} \right)_k(-n)_k\left(n+\frac{1}{2} \right)_k}}{\displaystyle{\left( \frac{1}{4}\right)_k \left(\frac{3}{4} \right)_kk!}} (-x)^k\\
&=&1+\sum_{k=1}^n \frac{\displaystyle{(-1)^k \binom{n}{k} \left(n+\frac{1}{2} \right)_k\left(-\frac{1}{2} \right)_k}}{\displaystyle{\left(\frac{1}{2} \right)_{2k} \frac{1}{2^{2k}}}}(-x)^k\\
&=&1-\sum_{k=1}^n (-1)^k \binom{n}{k}\frac{\displaystyle{\left(\frac{1}{2}+2k \right)_{n-k}2^k(2k-2)!}}{\displaystyle{\left(\frac{1}{2} \right)_{n}2^{k-1}(k-1)!}}(-x)^k\\
&=& 1-2\sum_{k=1}^n \binom{n}{k}\frac{\displaystyle{\left(\frac{1}{2}+2k \right)_{n-k}(2k-2)!}}{\displaystyle{\left(\frac{1}{2} \right)_{n}(k-1)!}}x^k\\
&=&1-2 \Psi_n(x),
\end{eqnarray*}
where the second equality uses equations (\ref{easy1}) and (\ref{easy2}), while the third equality employs equations (\ref{easy3}) and (\ref{easy4}). 
\end{proof}

\begin{lemma}\label{catalanlemma} Let $\displaystyle{C_n:=\frac{1}{n+1}\binom{2n}{n}}$ denote the $n^{th}$ Catalan number. For $n \in \mathbb{N}^{\geq 1}$ the following equality holds:
\begin{eqnarray*}
&&-\frac{C_{n-1}}{3 \cdot 2^{2n-2} \left(\frac{5}{2}\right)_{2n-2}}\\
&=&\frac{1}{2^{2n} \left(\frac{1}{2} \right)_{2n}}\left(2n\frac{(-1)^n \left( \frac{1}{2}\right)_n}{n!}+ \sum_{j=1}^{n-1} \frac{C_{j-1}}{3 \cdot 2^{2j-2} \left( \frac{5}{2} \right)_{2j-2}}\frac{(-1)^{n-j} \left(\frac{1}{2}\right)_{n+j}2^{2j}}{(n-j)!}\right).
\end{eqnarray*}
\end{lemma}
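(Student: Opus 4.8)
The plan is to clear denominators, recognize the asserted equation as a symmetric Pochhammer sum, and then evaluate both sides with generating functions.

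First I would simplify the weights that appear on both sides. Set $\beta_j := \dfrac{C_{j-1}}{3\cdot 2^{2j-2}\left(\frac{5}{2}\right)_{2j-2}}$. Since $\left(\frac{1}{2}\right)_{2j} = \frac{1}{2}\cdot\frac{3}{2}\cdot\left(\frac{5}{2}\right)_{2j-2} = \frac{3}{4}\left(\frac{5}{2}\right)_{2j-2}$, one gets $\beta_j = \dfrac{C_{j-1}}{2^{2j}\left(\frac{1}{2}\right)_{2j}}$, and in particular $2^{2j}\left(\frac{1}{2}\right)_{2j}\beta_j = C_{j-1}$. Multiplying the claimed identity through by $2^{2n}\left(\frac{1}{2}\right)_{2n}$, and transposing the left-hand term into the sum — it is exactly the missing $j=n$ summand, because $2^{2n}\left(\frac{1}{2}\right)_{2n}\beta_n = C_{n-1}$ — together with the telescoping relation $\dfrac{\left(\frac{1}{2}\right)_{n+j}}{\left(\frac{1}{2}\right)_{2j}} = \left(\frac{1}{2}+2j\right)_{n-j}$ (a consequence of (\ref{easy4})), reduces the statement to the equivalent identity
\[
\sum_{j=1}^{n}(-1)^{j}C_{j-1}\,\frac{\left(\frac{1}{2}+2j\right)_{n-j}}{(n-j)!} \;=\; -\,2n\,\frac{\left(\frac{1}{2}\right)_n}{n!}, \qquad n\ge 1.
\]

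I would then prove this reduced identity by comparing generating functions in $x$. Multiply by $x^n$, sum over $n\ge1$, and interchange the order of summation; the inner sum over $m=n-j$ is $\sum_{m\ge0}\frac{\left(\frac{1}{2}+2j\right)_m}{m!}x^m = (1-x)^{-(1/2+2j)}$, so the left side acquires the generating function $(1-x)^{-1/2}\sum_{j\ge1}C_{j-1}u^{j}$ with $u := \dfrac{-x}{(1-x)^2}$. Writing $c(u)=\sum_{i\ge0}C_i u^i = \dfrac{1-\sqrt{1-4u}}{2u}$ for the Catalan generating function, the remaining series equals $u\,c(u) = \frac{1}{2}\bigl(1-\sqrt{1-4u}\bigr)$, and the identity $1-4u = \dfrac{(1-x)^2+4x}{(1-x)^2} = \dfrac{(1+x)^2}{(1-x)^2}$ collapses this to $u\,c(u)=\frac{1}{2}\bigl(1-\frac{1+x}{1-x}\bigr) = \dfrac{-x}{1-x}$; hence the left side has generating function $\dfrac{-x}{(1-x)^{3/2}}$. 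For the right side, differentiating $(1-x)^{-1/2}=\sum_{n\ge0}\frac{(1/2)_n}{n!}x^n$ gives $\sum_{n\ge1}n\frac{(1/2)_n}{n!}x^n = \frac{x}{2}(1-x)^{-3/2}$, so its generating function is also $-x(1-x)^{-3/2}$. Equating coefficients of $x^n$ yields the reduced identity, and reversing the reduction proves the lemma.

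I expect the only real difficulty to be bookkeeping in the reduction step: keeping the signs $(-1)^{n-j}$ straight, verifying that the left-hand side of the lemma is precisely the $j=n$ term that completes the sum, and confirming the Pochhammer cancellations. The interchange of summation is harmless, since for small $|x|$ all the series converge absolutely and $u\to0$ as $x\to0$, so $c(u)$ is analytic there. As an alternative one could instead prove the reduced identity by induction on $n$ using the Catalan recursion $C_{n-1}=\sum_{i=0}^{n-2}C_iC_{n-2-i}$ and the Chu--Vandermonde summation, but the generating-function computation above is both shorter and less error-prone.
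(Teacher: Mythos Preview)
Your proof is correct. The reduction step is the same as the paper's: both arrive (up to a harmless factor of $(-1)^n n!/(1/2)_n$) at the equivalent assertion $\Psi_n(-1)=-2n$, which in your normalization reads
\[
\sum_{j=1}^{n}(-1)^{j}C_{j-1}\,\frac{\left(\tfrac{1}{2}+2j\right)_{n-j}}{(n-j)!}=-2n\,\frac{(1/2)_n}{n!}.
\]
Where you diverge is in the evaluation. The paper recognizes $1-2\Psi_n(x)$ as the terminating ${}_3F_2\!\left[-\tfrac12,-n,\tfrac12+n;\ \tfrac14,\tfrac34;\ -x\right]$ (Lemma~\ref{appendix}) and then invokes a classical summation theorem from Rainville to obtain the value $4n+1$ at $x=-1$ (Lemma~\ref{hypergeometric}). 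You instead sum over $n$, use the binomial series $(1-x)^{-(1/2+2j)}$, and collapse the outer $j$-sum via the Catalan generating function and the algebraic simplification $1-4u=(1+x)^2/(1-x)^2$ for $u=-x/(1-x)^2$, matching both sides to $-x(1-x)^{-3/2}$. Your route is entirely self-contained and avoids any appeal to an off-the-shelf hypergeometric identity; the paper's route situates the identity within standard ${}_3F_2$ theory but requires importing that theorem. One small remark: your citation of ``(\ref{easy4})'' is to an equation that in the paper sits inside the proof of the preceding lemma; it would be cleaner simply to state and verify $(1/2)_{n+j}/(1/2)_{2j}=(1/2+2j)_{n-j}$ directly, which is immediate from the definition of the Pochhammer symbol.
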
 
\begin{proof} Note that the statement of the lemma is equivalent to 
\begin{equation}\label{binomlemma}
0=2n \frac{(-1)^n \left(\frac{1}{2} \right)_{n}}{n!}+\sum_{j=1}^n \frac{C_{j-1} (-1)^{n-j} \left( \frac{1}{2}\right)_{n+j}}{\left(\frac{1}{2} \right)_{2j} (n-j)!}, \qquad \forall n \in \mathbb{N}^{\geq 1},
\end{equation}
or
\begin{equation}\label{binomlemma2}
0=2n+\Psi_n(-1) \qquad \forall n \in \mathbb{N}^{\geq 1},
\end{equation}
where $\Psi_n(x)$ is as in Lemma \ref{appendix}. Combining the results of Lemma \ref{hypergeometric} and \ref{appendix} gives
\[
1-2 \Psi_n(-1)=4n+1, \qquad \forall n \in \mathbb{N}^{\geq 1},
\]
or equivalently, $\Psi_n(-1)=-2n$ for $n\geq 1$. The proof is complete.
\end{proof}
We now prove the main theorem of the section.
\begin{theorem}\label{linear} Consider the operator $T:\R[x] \to \R[x]$ given by 
\[
T[\Le{k}]=(k+c) \Le{k} \qquad \textrm{for} \qquad k=0,1,2,3,\ldots, c\in \R.
\]
 If we write $\displaystyle{T=\sum_{k=0}^{\infty} T_k(x)D^k}$, then 
\begin{equation}\label{tk(0)}
T_k(0)=\left\{ \begin{array}{ll} 0 & \textrm{if $k$ is odd} \\ 
c & \textrm{if $k=0$} \\
\displaystyle{-\frac{C_{n-1}}{3 \cdot 2^{2n-2} \left( \frac{5}{2} \right)_{2 n-2}}} & \textrm{if $k=2n$}, \qquad (n \geq 1) \end{array}\right.
\end{equation}
where $\displaystyle{C_n}$ denotes the n${}^{th}$ Catalan number.
\end{theorem}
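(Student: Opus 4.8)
The plan is to turn the defining eigen‑relation $T[\mathfrak{Le}_n(x)]=(n+c)\mathfrak{Le}_n(x)$ into a triangular recursion for the numbers $T_k(0)$, and then to recognize, via Lemma~\ref{catalanlemma}, that the values in \eqref{tk(0)} solve that recursion. Writing $\mathfrak{Le}_n(x)=\sum_{k=0}^{n}\ell_{n,k}x^{k}$, and using that $D^{k}\mathfrak{Le}_n\equiv 0$ for $k>n$ while $(D^{k}\mathfrak{Le}_n)(0)=k!\,\ell_{n,k}$, I would evaluate the identity $T[\mathfrak{Le}_n(x)]=\sum_{k\geq 0}T_k(x)D^{k}\mathfrak{Le}_n(x)$ at $x=0$ to get
\[
(n+c)\,\mathfrak{Le}_n(0)=\sum_{k=0}^{n}k!\,\ell_{n,k}\,T_k(0),\qquad n=0,1,2,\dots .
\]
Because $\ell_{n,n}\neq 0$ this determines $T_n(0)$ from $T_0(0),\dots,T_{n-1}(0)$, so it suffices to verify that the proposed values satisfy this system.

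The case $n=0$ is immediate: $T_0(0)=T[1]=c$ (indeed $T_0(x)\equiv c$). For odd $n$ the polynomial $\mathfrak{Le}_n$ is odd, so $\mathfrak{Le}_n(0)=0$ and $\ell_{n,k}=0$ for even $k$; the recursion becomes $0=\sum_{k\ \mathrm{odd}}k!\,\ell_{n,k}T_k(0)$, and an induction on the odd integers (base case $\ell_{1,1}T_1(0)=0$) forces $T_k(0)=0$ for every odd $k$. For $n=2m$ with $m\geq 1$ only the even indices $k=2j$ contribute, and the $j=0$ term equals $\ell_{2m,0}\,c=c\,\mathfrak{Le}_{2m}(0)$, which cancels the $c$ on the left-hand side; hence
\[
2m\,\mathfrak{Le}_{2m}(0)=\sum_{j=1}^{m}(2j)!\,\ell_{2m,2j}\,T_{2j}(0).
\]

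To finish I would insert the classical values $\mathfrak{Le}_{2m}(0)=(-1)^{m}\left(\tfrac12\right)_{m}/m!$ and, from the explicit coefficients of the Legendre polynomials, $(2j)!\,\ell_{2m,2j}=(-1)^{m-j}4^{j}\left(\tfrac12\right)_{m+j}/(m-j)!$, so that after cancelling $(-1)^{m}$ the recursion reads
\[
2m\,\frac{\left(\tfrac12\right)_{m}}{m!}=\sum_{j=1}^{m}\frac{(-1)^{j}4^{j}\left(\tfrac12\right)_{m+j}}{(m-j)!}\,T_{2j}(0).
\]
Plugging in the candidate $T_{2j}(0)=-C_{j-1}/\left(4^{j}\left(\tfrac12\right)_{2j}\right)$ collapses the right-hand side and, after multiplying through by $(-1)^{m}$, turns the required identity into exactly equation \eqref{binomlemma}; so Lemma~\ref{catalanlemma} shows the candidate satisfies the recursion, and an induction on $m$ completes the even case. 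A final cosmetic step rewrites $-C_{j-1}/\left(4^{j}\left(\tfrac12\right)_{2j}\right)$ as $-C_{j-1}/\left(3\cdot 2^{2j-2}\left(\tfrac52\right)_{2j-2}\right)$ via $\left(\tfrac12\right)_{2j}=\tfrac34\left(\tfrac52\right)_{2j-2}$, which is the shape stated in \eqref{tk(0)}. I expect the only genuine effort to be the bookkeeping here: reconciling the hypergeometric normalization of $\Psi_n$ baked into \eqref{binomlemma} with the Pochhammer constants produced by the Legendre recursion; the rest is parity or a one‑line induction. (One can also bypass Lemma~\ref{catalanlemma} by applying $T$ coefficientwise to $(1-2xt+t^{2})^{-1/2}=\sum_k\mathfrak{Le}_k(x)t^{k}$, obtaining $c(1-2xt+t^{2})^{-1/2}+t(x-t)(1-2xt+t^{2})^{-3/2}$, setting $x=0$, expanding via $D_x^{k}(1-2xt+t^{2})^{-1/2}=2^{k}\left(\tfrac12\right)_{k}t^{k}(1-2xt+t^{2})^{-(2k+1)/2}$, and reparametrizing by $u=t/\sqrt{1+t^{2}}$, $v=u\sqrt{1-u^{2}}$: the identity becomes $\sum_{k}2^{k}\left(\tfrac12\right)_{k}T_k(0)\,v^{k}=c-u^{2}$ with $u^{2}=\sum_{n\geq 1}C_{n-1}v^{2n}$, whence \eqref{tk(0)} follows term by term.)
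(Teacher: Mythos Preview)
Your main argument is correct and follows the same route as the paper: both derive the triangular recursion for $T_k(0)$ by evaluating the eigen-relation at $x=0$ (the paper phrases this via Piotrowski's recursion for $T_k(x)$ and then sets $x=0$, but it is the identical system), dispose of odd indices by parity, and for $k=2m$ reduce to exactly equation~\eqref{binomlemma}, i.e.\ Lemma~\ref{catalanlemma}.

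Your parenthetical alternative, however, is a genuinely different and cleaner proof. Applying $T$ termwise to the generating function, setting $x=0$, and substituting $v=t/(1+t^{2})$ yields $\sum_{k}2^{k}\bigl(\tfrac12\bigr)_{k}T_k(0)\,v^{k}=c-u^{2}$ with $u^{2}=\tfrac12\bigl(1-\sqrt{1-4v^{2}}\bigr)=\sum_{n\geq 1}C_{n-1}v^{2n}$, so the values in~\eqref{tk(0)} drop out by coefficient comparison with no induction and no appeal to the hypergeometric machinery of Lemmas~\ref{hypergeometric}--\ref{catalanlemma}. The paper's approach, by contrast, keeps everything at the level of finite sums and reduces the combinatorial identity to a known ${}_3F_2$ evaluation; this is more laborious here but has the advantage of being a template that might generalize to other diagonal operators where a closed-form generating function trick is not available.
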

\begin{proof} The following facts about Legendre polynomials are known explicitly, or follow easily from basic properties (see for example \cite[p.157-158]{rainville}):
\begin{itemize}
\item[(i)] 
\[
\Le{n}=\frac{2^n \left(\frac{1}{2}\right)_n x^n}{n!}+\pi_{n-2}, \qquad (n \geq 0),
\]
where $\pi_{n-2}$ is a polynomial of degree $n-2$ in $x$;
\item[(ii)] 
\[
\mathfrak{Le}_{2n+1}(0)=0\qquad (n \geq 0);
\]
\item[(iii)] 
\[
\mathfrak{Le}_{2n}(0)=\frac{(-1)^n \left(\frac{1}{2} \right)_n}{n!}\qquad (n \geq 0);
\]
\item[(iv)] For $0\leq j \leq n$
\[
D^{2j} \mathfrak{Le}_{2n}(x)\Big|_{x=0}=\frac{(-1)^{n-j} \left(\frac{1}{2}\right)_{n+j}2^{2j}}{(n-j)!},
\]
while 
\[
D^{2j} \mathfrak{Le}_{2n+1}(x)\Big|_{x=0}=0 \quad  \text{for all} \quad j,n \geq 0,
\]
 simply because Legendre polynomials with odd index are odd.
\end{itemize}
The {\it mutatis mutandis} proof of Proposition 29 in \cite{andrzej} demonstrates that the coefficient polynomials $T_k(x)$ of the linear operator given in Theorem 4 can be computed recursively as
\begin{eqnarray*}
T_0(x)&=&T[1], \qquad \text{and} \\
T_k(x)&=&\frac{1}{2^k \left(\frac{1}{2} \right)_k}\left(T[\Le{k}] -\sum_{j=0}^{k-1} T_j(x)D^j[\Le{k}]\right) \qquad (k=1,2,3,\ldots).
\end{eqnarray*}
It is now easy to verify that $T_0(x)=c$, $T_1(x)=x$ and $T_2(x)=-\frac{1}{3}$, and the proposed values of $T_k(0)$ follow readily for $k=0,1,2$.
Proceeding by induction we assume that $T_j(0)$ is given by equation (\ref{tk(0)}) for $0 \leq j \leq k-1$ for some $k \geq 1$. If $k$ is odd, the second part of fact (iv) above yields
\begin{eqnarray*}
T_k(0)&=&\frac{1}{2^k \left(\frac{1}{2} \right)_k}\left[(k+c) \mathfrak{Le}_k(0)-\sum_{j=0}^{k-1} T_j(x)D^j[\Le{k}] \Bigg|_{x=0}\right]\\
&=&\frac{1}{2^k \left(\frac{1}{2} \right)_k}\left[-\sum_{j=0}^{\frac{k-1}{2}} T_{2j}(x)D^{2j}[\Le{k}] \Bigg|_{x=0}\right]\\
&=&0.
\end{eqnarray*}
On the other hand, if $k$ is even, writing $k=2n$ and using the first part of fact (iv) gives
\begin{eqnarray*}
T_k(0)&=&\frac{1}{2^{2n} \left(\frac{1}{2} \right)_{2n}}\left[(2 n+c) \frac{(-1)^n \left(\frac{1}{2} \right)_n}{n!}
-\sum_{j=0}^{k-1} T_j(x)D^j[\Le{k}] \Bigg|_{x=0}\right]\\
&=&\frac{1}{2^{2n} \left(\frac{1}{2} \right)_{2n}}\left[(2 n) \frac{(-1)^n \left(\frac{1}{2} \right)_n}{n!}-\sum_{j=1}^{k-1} T_j(x)D^j[\Le{k}] \Bigg|_{x=0}\right]\\
&=&\frac{1}{2^{2n} \left(\frac{1}{2} \right)_{2n}}\left[(2 n) \frac{(-1)^n \left(\frac{1}{2} \right)_n}{n!}-\sum_{j=1}^{\frac{k-2}{2}} T_{2j}(x)D^{2j}[\Le{k}] \Bigg|_{x=0}\right]\\
&=&\frac{1}{2^{2n} \left(\frac{1}{2} \right)_{2n}}\left[(2 n) \frac{(-1)^n \left(\frac{1}{2} \right)_n}{n!}+\sum_{j=1}^{n-1} \frac{C_{j-1}}{3 \cdot 2^{2j-2} \left( \frac{5}{2} \right)_{2j-2}}\frac{(-1)^{n-j} \left(\frac{1}{2}\right)_{n+j}2^{2j}}{(n-j)!}
\right]\\
&=&-\frac{C_{n-1}}{3 \cdot 2^{2n-2} \left(\frac{5}{2}\right)_{2n-2}},
\end{eqnarray*}
where the last equality is the result of Lemma \ref{catalanlemma}. 
\end{proof}
Let $T$ be the operator corresponding to the Legendre sequence $\seq{k+c}$. Recall that the symbol of $T$ is given by
\[
G_T(-x,y)=\sum_{k=0}^{\infty} \frac{(-1)^k T[x^k]y^k}{k!},
\]
and that $T$ is reality preserving (i.e. $\seq{k+c}$ is a Legendre multiplier sequence) if and only if either $G_T(-x,y)$ or $G_T(x,y)$ belongs to $\mathcal{L-P}_2(\R)$, since the sequence under consideration is non-trivial. Following \cite{bo}, we expand $G_T(-x,y)$ and $G_T(x,y)$ as a series in powers of $x$. By Theorem \ref{linear} the constant term in both of these expansions is 
\[
f(y):=\displaystyle{c-\sum_{k=1}^{\infty} \frac{C_{k-1}\cdot y^{2k}}{3 \cdot 2^{2k-2} \left(\frac{5}{2} \right)_{2k-2}}}.
\]
Thus $f(y) \in \mathcal{L-P}$ if either $G_T(-x,y)$ or $G_T(x,y)$ were in $\mathcal{L-P}_2(\R)$, since we obtain $f(y)$ from either $G_T(-x,y)$ or $G_T(x,y)$ by applying the non-negative multiplier sequence $\{1,0,0,0,\ldots\}$ acting on $x$, which preserves the class $\mathcal{L-P}_2(\R)$ (see \cite{bb} and \cite{branden}). We shall now demonstrate that $f(y)$ is an entire function  which does not belong to the Laguerre-P\'olya class, and hence $\seq{k+c}$ is not a Legendre multiplier sequence for any $c \in \R$.
\begin{proposition} Let $c \in \R$. Then 
\[
f(y)=\displaystyle{c-\sum_{k=1}^{\infty} \frac{C_{k-1}\cdot y^{2k}}{3 \cdot 2^{2k-2} \left(\frac{5}{2} \right)_{2k-2}}}
\]
is an entire function which does not belong to $\mathcal{L-P}$.
\end{proposition}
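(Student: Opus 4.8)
The plan is to prove the statement by contradiction using the classical Laguerre inequality, i.e.\ the $n=1$ instance of Theorem \ref{cvish}, which was recorded in Section \ref{background} for all derivatives of a function in $\mathcal{L-P}$. Before invoking it, one must check that $f$ is genuinely an entire function. Since $C_{k-1}=\frac1k\binom{2k-2}{k-1}\le 2^{2k-2}$ and $\left(\frac52\right)_{2k-2}=\prod_{j=1}^{2k-2}\left(j+\tfrac32\right)> (2k-2)!$, the coefficient of $y^{2k}$ in $f$ has absolute value at most $\frac1{3(2k-2)!}$; hence the defining series converges locally uniformly on $\C$ — indeed $|f(y)|\le |c|+\frac{|y|^2}{3}\cosh|y|$ — so $f$ is entire (of order at most $1$).

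Now suppose, toward a contradiction, that $f\in\mathcal{L-P}$. Then, as noted after Theorem \ref{cvish}, $L_1\!\left(x,f''(x)\right)=\bigl(f'''(x)\bigr)^2-f''(x)f^{(4)}(x)\ge 0$ for every $x\in\R$, and I would simply evaluate this at $x=0$. From $C_0=C_1=1$, $\left(\frac52\right)_0=1$ and $\left(\frac52\right)_2=\frac{35}{4}$, the coefficients of $y^2$ and $y^4$ in $f$ are $-\frac13$ and $-\frac1{105}$, so $f''(0)=-\frac23$, $f'''(0)=0$ (because $f$ is even), and $f^{(4)}(0)=-\frac8{35}$. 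Therefore
\[
L_1\!\left(0,f''\right)=0-\left(-\tfrac23\right)\left(-\tfrac8{35}\right)=-\tfrac{16}{105}<0,
\]
contradicting the inequality above; hence $f\notin\mathcal{L-P}$. Note that $c$ plays no role: it affects only $f(0)$ and $f'(0)=0$, not $f''$ and its derivatives, so this single inequality settles every $c\in\R$ simultaneously. (Conceptually: $f''$ is even with $f''(0)<0$ and $(f'')''(0)<0$, and no member of $\mathcal{L-P}$ can be negative together with its second derivative at a critical point.)

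A slightly longer alternative avoids Theorem \ref{cvish}: if an even $\varphi\in\mathcal{L-P}$ has $\varphi(0)\ne 0$, its Hadamard factorization reads $\varphi(y)=\varphi(0)\,e^{-\alpha y^2}\prod_k\bigl(1-y^2/r_k\bigr)$ with $\alpha\ge 0$, $r_k>0$, so in the variable $t=y^2$ it becomes $\varphi(0)\sum_n(-1)^n w_n t^n$ with all $w_n\ge 0$; matching this against $f$ written in $t=y^2$, namely $c-\sum_{k\ge1}a_k t^k$ with every $a_k>0$, forces $c>0$ from the coefficient of $t^1$ and $c<0$ from the coefficient of $t^2$ (handling $c=0$ by first dividing $f$ by $y^2$), again a contradiction. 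I would present the Laguerre-inequality version, since it is the shortest and uses only results already recalled in Section \ref{background}. Frankly there is no serious obstacle here — the substantive work was done in Theorem \ref{linear}, which produces the explicit coefficients; the only points requiring care are making the entireness estimate rigorous and confirming that one may apply the Laguerre inequality to $f''$ rather than to $f$, which is exactly the closure of $\mathcal{L-P}$ under differentiation.
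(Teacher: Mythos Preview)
Your proof is correct and uses essentially the same approach as the paper: both verify entireness by a direct coefficient estimate and then violate the $n=1$ Laguerre inequality (Theorem~\ref{cvish}) at the origin for a derivative. The only difference is that the paper first substitutes $x=y^2$, shows $L_1(0,\tilde f\,')<0$ for $\tilde f(x)=f(\sqrt{x})$, and then argues via the even Hadamard factorization that $f\in\mathcal{L-P}$ would force $\tilde f\in\mathcal{L-P}$; your computation of $L_1(0,f'')<0$ bypasses the change of variables and is marginally cleaner.
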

\begin{proof} Consider the change of variables $x=y^2$ and the function
\[
\widetilde{f}(x)=\displaystyle{c-\frac{4}{3}\sum_{k=1}^{\infty} \frac{C_{k-1}\cdot x^k}{2^{2k} \left(\frac{5}{2} \right)_{2k-2}}}=c-\frac{4}{3}\sum_{k=1}^{\infty} a_k x^k.
\]
Since 
\[
(\star) \qquad\lim_{k \to \infty} \frac{a_{k+1}}{a_k}=\lim_{k \to \infty} \frac{2(2k-1)}{k+1}\cdot \frac{1}{(5+2(2k-2))(5+2(2k-1))}=0,
\]
$\widetilde{f}(x)$ is entire. The existence of the limit in $(\star)$ implies that $\displaystyle{\lim_{k \to \infty} \sqrt[k]{a_k}=0}$ as well, and hence $f(y)$ is also entire. 
\newline \indent It remains to show that $f(y) \notin \mathcal{L-P}$. To this end, we first demonstrate that $\widetilde{f}(x) \notin \mathcal{L-P}$. Writing $d_k=k! a_k$ we can express $\widetilde{f}(x)$ as
\[
\widetilde{f}(x)=c-\frac{4}{3}\sum_{k=1}^{\infty} \frac{d_k}{k!}x^k.
\]
By Theorem \ref{cvish} and the comments thereafter, if $\widetilde{f}(x)$ were to belong to $\mathcal{L-P}$, we would have $L_1(x, \widetilde{f}^{(k)}) \geq 0$ for all $k=0,1,2,\ldots$ and $x \in \R$. In particular, $\displaystyle{L_1(0,\widetilde{f}')=\frac{16}{9}\left(d_2^2-d_{3}d_{1}\right) \geq 0}$ would hold. A quick calculation reveals that 
\[
d_2^2-d_3d_1=-\frac{1}{80850}<0,
\]
establishing that $\widetilde{f}(x) \notin \mathcal{L-P}$. Suppose now that $f(y) \in \mathcal{L-P}$. By virtue of being an even function, $f(y)$ has the factorization
\[
f(y)=ce^{-a y^2} \prod_{k=1}^{\omega} \left( 1-\frac{y^2}{x^2_k} \right),
\]
where $a \geq 0$ and $x_k \in \R \setminus\{0\}$, $0 \leq \omega \leq +\infty$, and $\displaystyle{\sum 1/x_k^2< +\infty}$. Replacing $y^2$ by $x$ would yield $\tilde{f}(x) \in \mathcal{L-P}$, a contradiction. We conclude that $f(y) \notin \mathcal{L-P}$, and our proof is complete.
\end{proof}

\vspace{.25in}
\section{Cubic Legendre multiplier sequences}\label{cubics}
In this section we establish the non-existence of cubic Legendre multiplier sequences. Without loss of generality we may consider sequences interpolated by monic polynomials. Since every such cubic polynomial can be written as $(k^2+\alpha k+\beta)(k+c)$ for some real triple $(\alpha, \beta, c)$, one may wish to proceed based on whether or not the quadratic factor in the product is itself a Legendre multiplier sequence. It turns out that such case analysis is more than one needs: we can handle all cubic sequences at once. We begin with two preparatory results. 
\begin{lemma}\label{cubicCMSclass}
Suppose $T=\seq{k^3+ak^2+bk+c}$ is a sequence of non-negative terms. If $T$ is a classical multiplier sequence, then $a\geq -3$, $a+b \geq -1$ and $c \geq 0$.
\end{lemma}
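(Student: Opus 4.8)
The plan is to evaluate the exponential generating function $T[e^x]$ in closed form and then read the three inequalities directly off the zeros of the resulting polynomial factor. First I would re-expand the interpolating polynomial in falling factorials: from $k^2 = k(k-1) + k$ and $k^3 = k(k-1)(k-2) + 3k(k-1) + k$ one obtains
\[
k^3 + ak^2 + bk + c = k(k-1)(k-2) + (3+a)\,k(k-1) + (1+a+b)\,k + c .
\]
Since $\displaystyle \sum_{k=0}^{\infty} \frac{k(k-1)\cdots(k-j+1)}{k!}\,x^k = x^{j} e^{x}$ for every integer $j \geq 0$ (the terms with $k < j$ vanishing), summing term by term gives
\[
T[e^x] \;=\; \sum_{k=0}^{\infty} \frac{k^3+ak^2+bk+c}{k!}\,x^k \;=\; e^{x}\bigl(x^{3} + (3+a)x^{2} + (1+a+b)x + c\bigr) \;=:\; e^{x} q(x).
\]

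Next I would invoke the P\'olya--Schur theorem (Theorem \ref{Polya}): since $T$ is a classical multiplier sequence whose terms are non-negative, $T[e^x] = e^{x} q(x) \in \mathcal{L-P}^{+}$. Because $e^{x}$ has no zeros, every zero of $e^{x} q(x)$ is a zero of the cubic $q$; as $\mathcal{L-P}^{+} \subset \mathcal{L-P}$ consists of functions with only real zeros, $q$ has only real zeros, and as a nonzero element of $\mathcal{L-P}^{+}$ has no positive zeros, all three zeros of $q$ lie in $(-\infty, 0]$. Hence $q(x) = (x+r_1)(x+r_2)(x+r_3)$ with $r_1, r_2, r_3 \geq 0$, so every coefficient of $q$ is non-negative. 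Comparing with $q(x) = x^{3} + (3+a)x^{2} + (1+a+b)x + c$ yields $3+a = r_1+r_2+r_3 \geq 0$, $1+a+b = r_1r_2+r_1r_3+r_2r_3 \geq 0$, and $c = r_1r_2r_3 \geq 0$; that is, $a \geq -3$, $a+b \geq -1$, and $c \geq 0$ (the last of these being also immediate, since the $k=0$ term of $T$ equals $c$ and is assumed non-negative).

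The one step that needs a little care is the implication that $e^{x} q(x) \in \mathcal{L-P}^{+}$ forces $q$ to have only non-positive real zeros; I would justify this through the zero structure of the product, as sketched above, rather than by attempting to ``divide out'' the factor $e^{x}$ inside $\mathcal{L-P}$. I do not anticipate a real obstacle here. The lemma itself is best viewed as a preparatory normalization: its role in what follows is to confine the parameter triples $(a, b, c)$ that still have to be excluded in the proof of Theorem \ref{nocubics}.
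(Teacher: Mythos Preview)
Your argument is correct and is essentially the paper's own: compute $T[e^x]=e^{x}\bigl(x^{3}+(a+3)x^{2}+(a+b+1)x+c\bigr)$ via Theorem~\ref{Polya}(3) and read off the non-negativity of the coefficients of the cubic factor. Your version is in fact slightly more detailed, since you spell out the falling-factorial expansion and justify why membership in $\mathcal{L-P}^{+}$ forces the zeros of $q$ to be non-positive, whereas the paper simply asserts that the coefficients of $p$ must be non-negative.
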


\begin{proof}
By part (3) of Theorem \ref{Polya}, $T$ is a \cms ~ if and only if 
\begin{eqnarray*}
T[e^x] &=& \sum_{k=0}^{\infty}(k^3+ak^2+bk+c)\frac{x^k}{k!}\\
&=&e^x \left( x^3+(a+3)x^2+(a+b+1)x+c\right) \in \mathcal{L-P^+}.
\end{eqnarray*}
In particular, the coefficients of the polynomial 
\[
p(x)=x^3+(a+3)x^2+(a+b+1)x+c
\]
must all be non-negative. The claim follows.
\end{proof}
\begin{lemma}\label{oppsign}(\cite[Lemma 3, p. 337]{levin}) If all zeros of the real polynomial 
\[
h(x)=c_0+c_1x+\cdots+c_nx^n \quad (c_n \neq 0)
\]
are real, $c_0 \neq 0$ and $c_p=0$ $(0<p<n)$, then $c_{p-1}c_{p+1}<0$.
\end{lemma}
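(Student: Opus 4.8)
The plan is to recast everything in terms of $g:=h^{(p-1)}$. Since every derivative of a real polynomial with only real zeros again has only real zeros, $g$ is real-rooted; and from $h(x)=\sum_k c_kx^k$ one reads off $c_{p-1}=g(0)/(p-1)!$, $c_p=g'(0)/p!$, and $c_{p+1}=g''(0)/(p+1)!$. Thus the hypothesis $c_p=0$ says exactly $g'(0)=0$, and the conclusion $c_{p-1}c_{p+1}<0$ is equivalent to $g(0)\,g''(0)<0$ — an inequality which already presupposes $g(0)\neq 0$ and $g''(0)\neq 0$. Accordingly I would first prove $g(0)\neq 0$ and then determine the sign of $g(0)g''(0)$.

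The tool for both parts is the partial-fraction expansion of the logarithmic derivative. If $Q$ is real-rooted with $Q(0)\neq 0$, then $Q'/Q=\sum_i m_i/(x-r_i)$ with $m_i\in\mathbb{N}$ and $r_i\in\R\setminus\{0\}$; differentiating and setting $x=0$ gives
\[
\frac{Q''(0)}{Q(0)}-\left(\frac{Q'(0)}{Q(0)}\right)^{2}=-\sum_i\frac{m_i}{r_i^{2}},
\]
and if $Q$ is moreover non-constant the right-hand side is strictly negative. Granting for the moment that $g(0)\neq 0$, we apply this with $Q=g$: since $g'(0)=0$ and $\deg g=n-p+1\geq 2$, we get $g''(0)/g(0)<0$, i.e. $g(0)g''(0)<0$, i.e. $c_{p-1}c_{p+1}<0$, as desired.

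It remains to prove $g(0)\neq 0$, and this is where the hypothesis $c_0\neq 0$ is used. Suppose not; then $g(0)=g'(0)=0$, so $0$ is a zero of $g$ of multiplicity at least $2$. I would use the auxiliary fact: \emph{if $R$ is a non-constant real-rooted polynomial and $0$ is a zero of $R'$ of multiplicity $\nu\geq 2$, then $0$ is a zero of $R$ of multiplicity $\nu+1$}. Indeed, were $R(0)\neq 0$ the displayed identity would force $R''(0)\neq 0$, contradicting that $0$ is at least a double zero of $R'$; hence $R(0)=0$, and the standard relation between the order of a zero of $R$ and of $R'$ then gives multiplicity $\nu+1$. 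Applying this fact in turn to $R=h^{(p-2)},h^{(p-3)},\dots,h^{(0)}$ — each non-constant, since $\deg h^{(j)}=n-j\geq n-p+2\geq 3$ for $0\leq j\leq p-2$ — the multiplicity of $0$ as a zero of $h^{(j)}$ climbs by one at every step, forcing $0$ to be a zero of $h=h^{(0)}$ and contradicting $c_0\neq 0$. (For $p=1$ there is nothing to propagate: $g=h$ and $g(0)=c_0\neq 0$ outright.) Therefore $g(0)\neq 0$, completing the argument.

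The main obstacle is exactly this last paragraph. Knowing $c_p=0$ only tells us that $0$ is \emph{some} zero of $h^{(p-1)}$; if it were a multiple zero we would have $c_{p-1}=0$ as well and the strict inequality $c_{p-1}c_{p+1}<0$ would fail. The iterated logarithmic-derivative argument, pinned down by $c_0\neq 0$, is precisely what rules this out; the reduction to $g$ and the final sign computation are routine.
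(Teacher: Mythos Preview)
The paper does not supply its own proof of this lemma; it is quoted verbatim from Levin \cite[Lemma 3, p.\,337]{levin} and used as a black box in the proof of Theorem~\ref{nocubics}. So there is nothing in the paper to compare against directly.

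Your argument is correct. The reduction to $g=h^{(p-1)}$ and the logarithmic-derivative identity
\[
\frac{Q''(0)}{Q(0)}-\Bigl(\frac{Q'(0)}{Q(0)}\Bigr)^{2}=-\sum_i\frac{m_i}{r_i^{2}}<0
\]
for a non-constant real-rooted $Q$ with $Q(0)\neq 0$ are exactly the right tools; note that this identity is nothing other than the Laguerre inequality $L_1(0,Q)>0$ from Theorem~\ref{cvish}, which the paper already invokes elsewhere. The backward propagation step---showing that a double zero of $h^{(p-1)}$ at the origin would force $h(0)=0$---is handled cleanly by your auxiliary fact, and the degree bookkeeping ($\deg h^{(j)}=n-j\geq 3$ for $0\le j\le p-2$) ensures each application is legitimate. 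There are no gaps.
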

We are now ready to state and prove the main theorem of the section.
\begin{theorem}\label{nocubics} The sequence $\seq{k^3+ak^2+bk+c}$ is not a Legendre multiplier sequence for any real triple $(a,b,c)$. 
\end{theorem}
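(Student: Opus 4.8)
The plan is to follow the strategy of Section~\ref{linears}. Write $\gamma_k=k^3+ak^2+bk+c$ and let $T$ be the associated operator, $T=\sum_{k\ge0}T_k(x)D^k$. First I would normalize: since $\seq{(-1)^k}$ is a Legendre multiplier sequence we may assume $\gamma_k\ge 0$ for all $k$; in fact, because $\gamma_k\to+\infty$, a cubic classical multiplier sequence is \emph{forced} to have non-negative terms (cf.\ the paragraph before Theorem~\ref{Polya}), and in particular $c=\gamma_0\ge 0$ (also a consequence of Lemma~\ref{cubicCMSclass}). The sequence is non-trivial and $T$ has infinite rank (its range contains $\Le{k}$ whenever $\gamma_k\ne0$, i.e.\ for all but at most three $k$), so the only relevant alternatives in Theorem~\ref{symbol} are $G_T(\pm x,y)\in\mathcal{L-P}_2(\mathbb{R})$. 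Applying the multiplier sequence $\{1,0,0,\dots\}$ in the variable $x$ (which preserves $\mathcal{L-P}_2(\mathbb{R})$), exactly as in Section~\ref{linears}, shows that if $\seq{\gamma_k}$ is a Legendre multiplier sequence then the constant-in-$x$ part of the symbol,
\[
f(y):=G_T(0,y)=\sum_{k\ge0}(-1)^kT_k(0)y^k,
\]
lies in $\mathcal{L-P}$.

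Next I would compute $f$. The recursion for the $T_k(x)$ from the proof of Theorem~\ref{linear} uses only the leading coefficient of $\Le{k}$, so it holds verbatim here; combined with facts (ii)--(iv) it gives $T_k(0)=0$ for odd $k$, so $f$ is even, $f(y)=c+\sum_{n\ge1}T_{2n}(0)y^{2n}$ (with $T_0(0)=\gamma_0=c$), and the even coefficients obey the triangular system
\[
\sum_{m=0}^{n}T_{2m}(0)\,\frac{(-1)^{n-m}(\tfrac12)_{n+m}2^{2m}}{(n-m)!}=\gamma_{2n}\,\frac{(-1)^n(\tfrac12)_n}{n!},\qquad n\ge0,
\]
with $\gamma_{2n}=8n^3+4an^2+2bn+c$. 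Solving it for $n=1,2,3,4$ yields
\[
T_2(0)=-\frac{4+2a+b}{3},\qquad T_4(0)=\frac{w+14}{105},\qquad T_6(0)=\frac{2w+13}{10395},\qquad T_8(0)=\frac{5w+25}{2027025},
\]
where $w:=a-b$; the key structural observation is that for $n\ge2$ the coefficient $T_{2n}(0)$ depends on $(a,b)$ \emph{only through} $w$, and that the ``roots'' of $T_4(0),T_6(0),T_8(0)$ as functions of $w$ are $-14<-\tfrac{13}{2}<-5$.

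Now I would extract the contradiction. Being even and in $\mathcal{L-P}$, $f$ factors as $f(y)=T_m(0)\,y^{m}e^{-\alpha y^2}\prod_j(1-y^2/x_j^2)$ with $\alpha\ge0$, $x_j\in\mathbb{R}\setminus\{0\}$, $\sum_j x_j^{-2}<\infty$, and $m=\operatorname{ord}_0 f$ even. Hence
\[
\Phi(x):=\frac{f(\sqrt{x})}{x^{m/2}}=T_m(0)\,e^{-\alpha x}\prod_j\Bigl(1-\frac{x}{x_j^2}\Bigr)
\]
is a product of power series whose Taylor coefficients are weakly alternating in sign ($e^{-\alpha x}$ because $\alpha\ge0$, and $\prod_j(1-x/x_j^2)$ because $x_j^2>0$), and a product of weakly alternating sequences is weakly alternating. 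Since the $i$-th coefficient of $\Phi$ is $T_{m+2i}(0)$, we conclude that $(-1)^iT_{m+2i}(0)$ all have the sign of $T_m(0)$ (or vanish). Because $T_4(0)=0$ and $T_6(0)=0$ require $w=-14$ and $w=-\tfrac{13}{2}$ respectively, they cannot vanish together, so $m\le 6$; and $m\ge2$ forces $c=0$, while non-negativity of the sequence always gives $\gamma_2=2(4+2a+b)\ge0$, i.e.\ $T_2(0)\le0$. A short case analysis finishes: for $m\in\{0,2\}$ the alternation (with $c>0$, resp.\ $T_2(0)<0$) yields $T_6(0)\le0$ and $T_8(0)\ge0$, i.e.\ $w\le-\tfrac{13}{2}$ and $w\ge-5$, which is impossible; for $m=4$ it yields $T_4(0)T_6(0)\le0$ and $T_4(0)T_8(0)\ge0$, i.e.\ $(w+14)(2w+13)\le0$ and $(w+14)(w+5)\ge0$, forcing $w=-14$ and hence $T_4(0)=0$, a contradiction; and $m=6$ forces $w=-14$ together with $c=0$ and $T_2(0)=0$, whence $(a,b,c)=(-6,8,0)$ and $\gamma_3=-3<0$, contradicting non-negativity.

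The main obstacle is twofold. The unavoidable bookkeeping is the computation of $T_2(0),\dots,T_8(0)$ from the recursion, which needs the explicit values of $\Le{2n}(0)$ and $D^{2j}\Le{2n}(0)$ from facts (iii)--(iv); this is routine but must be done carefully. The more delicate point is choosing the \emph{right} necessary condition on $f$: the Laguerre inequalities evaluated at the origin turn out to be satisfiable (for instance by parameter values with $w$ large and positive), so one must instead use the weak alternation of the coefficients of the de-squared symbol $\Phi$, which genuinely exploits the product structure of the Laguerre--P\'olya factorization. Everything after that is elementary; Lemma~\ref{oppsign} provides an alternative packaging of the alternation step in the degenerate cases where $\Phi$ reduces to a polynomial, and Lemma~\ref{cubicCMSclass} supplies (and, if one prefers, further restricts) the sign information used above.
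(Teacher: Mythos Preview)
Your proposal is correct, but it takes a genuinely different route from the paper's own proof of Theorem~\ref{nocubics}.

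The paper argues directly with test polynomials: it applies $T_{a,b,c}$ to $p_1=x^5\Le{3}$ and $p_2=x^5\Le{5}$, reads off the constant and degree-$4$ coefficients of the images, and uses Lemma~\ref{oppsign} together with the sign constraints of Lemma~\ref{cubicCMSclass} to show that preserving the reality of the zeros of both $p_1$ and $p_2$ forces the mutually exclusive inequalities $b\le a-\tfrac{121}{46}$ and $b\ge a-\tfrac{641}{806}$.

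You instead extend the symbol technique of Section~\ref{linears} to the cubic case, precisely along the lines the paper itself advertises in Section~5.3 as an open approach. Your key structural observation---that for $n\ge2$ the constant term $T_{2n}(0)$ depends on $(a,b)$ only through $w=a-b$---is correct (it follows because the Legendre operator for $\seq{k^2+k}$ is the finite-order operator $(x^2-1)D^2+2xD$, so $T^{(2)}_{2n}(0)=-T^{(1)}_{2n}(0)$ for $n\ge2$), and your values $T_4(0)=\frac{w+14}{105}$, $T_6(0)=\frac{2w+13}{10395}$, $T_8(0)=\frac{w+5}{405405}$ check out against the recursion. The decisive improvement over the linear-case argument is replacing the Laguerre-inequality test by the strictly sharper fact that the de-squared symbol $\Phi(x)=T_m(0)e^{-\alpha x}\prod_j(1-x/x_j^2)$ has weakly alternating Taylor coefficients; this is exactly what makes the case analysis on $m\in\{0,2,4,6\}$ close, with the ordering $-14<-\tfrac{13}{2}<-5$ of the roots doing the work.

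In short: the paper's proof is shorter and needs only two finite computations, but depends on finding the right test polynomials; your proof is more structural, avoids that search, and carries out what the paper proposes as future work---at the cost of computing four coefficients from the triangular system and the alternation/factorization argument.
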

\begin{proof}
Denote by $T_{a,b,c}$ the operator associated to the Legendre sequence $\seq{k^3+ak^2+bk+c}$. By Lemma \ref{cubicCMSclass}, in order  for $\seq{k^3+ak^2+bk+c}$ to be a classical multiplier sequence we must have $a\geq -3$, $a+b \geq -1$ and $c \geq 0$.
Consider now the action of $T_{\alpha,\beta,c}$ on the two polynomials
\begin{eqnarray*}
p_1(x)&=&x^5 \Le{3}\\
&=&\frac{64}{1287} \Le{8}+\frac{152}{693} \Le{6}+\frac{372}{1001} \Le{4}+\frac{205}{693} \Le{2}+\frac{4}{63}
\end{eqnarray*}
and 
\begin{eqnarray*}
p_2(x)&=&x^5 \Le{5}\\
&=&\frac{2016}{46189} \Le{10}+\frac{4816}{24453} \Le{8}+\frac{4078}{11781} \Le{6}+\frac{291}{1001} \Le{4}+\frac{1000}{9009}\Le{2}+\frac{8}{693}.
\end{eqnarray*}
Computing
\begin{eqnarray*}
18018 \cdot T_{a,b,c}[p_1(x)]&=&\sum_{k=0}^4 q_{2k}(a,b,c)x^{2k}
\end{eqnarray*}
we find that
\begin{eqnarray*}
q_0(a,b,c)&=&16(-121+46a-46 b), \qquad \text{and}\\
q_4(a,b,c)&=&630(15724+1226 a+61 b),
\end{eqnarray*}
with the restrictions on $a,b$ and $c$ implying directly that $q_4(a,b,c) >0$ for all real triples $(a,b,c)$ under consideration. If $q_2(a,b,c)=0$, then reversing coefficients, and taking four derivatives of $T_{a,b,c}[p_1(x)]$ (both of which operations preserve the reality of zeros) results in a polynomial with non-real zeros. If $q_2(a,b,c) \neq 0$, then in light of Lemma \ref{oppsign}, a necessary condition for $T_{a,b,c}[p_1(x)]$ to have only real zeros is that
\[
(\dag) \qquad q_0(a,b,c)=16(-121+46a-46 b) \geq 0.
\]
We now turn our attention to $T_{a,b,c}[p_2(x)]$. If we write 
\[
23279256 \cdot T_{a,b,c}[p_2(x)]=\sum_{k=0}^5 w_{2k}(a,b,c)x^{2k},
\]
then
\begin{eqnarray*}
w_0(a,b,c)&=&16(-641+806 a-806 b), \qquad \text{and}\\
w_4(a,b,c)&=&-630(38840980+2015774 a+62731 b),
\end{eqnarray*}
with Lemma \ref{cubicCMSclass} implying that $w_4(a,b,c)<0$ for all admissible triples $(a,b,c)$. Considerations identical to those above imply that either $T_{a,b,c}[p_2(x)]$ has non-real zeros, or the inequality
\begin{eqnarray*}
w_0(a,b,c)=16(-641+806 a-806 b)\leq 0
\end{eqnarray*}
must hold. Combining inequalities $(\dag)$ and $(\ddag)$ we obtain
\[
-\frac{121}{46}+a \geq b \geq -\frac{641}{806}+a,
\]
a clear impossibility. We conclude that $T_{a,b,c}$ cannot simultaneously preserve the reality of the zeros of $x^5 \Le{3}$ and $x^5 \Le{5}$. Whence $\seq{k^3+ak^2+bk+c}$ is not a Legendre multiplier sequence for any real triple $(a,b,c)$.
\end{proof}
\begin{remark} Theorem \ref{nocubics} yields yet another proof of the non-existence of linear Legendre multiplier sequences by the following considerations. If $T_1, T_2$ are Legendre multiplier sequences, then so is $T_1 T_2$. Since $\{k^2+k+\beta\}$ is a Legendre multiplier sequence whenever $\beta \in [0,1]$, the existence of linear Legendre multiplier sequences would immediately imply the existence of cubic Legendre multiplier sequences, contradicting Theorem \ref{nocubics}. 
\end{remark}
\section{Open problems}
The following is a list of open problems motivated by the preceding results. These questions are not only related to the classification of Legendre multiplier sequences but also to some general properties of reality preserving linear operators $\displaystyle{T=\sum_{k=0}^{\infty}T_k(x)D^k}$ on $\R[x]$, properties which are captured in the coefficient polynomials $T_k(x)$.
\subsection{Higher order Legendre sequences} The characterization of polynomials with degree four or higher which interpolate Legendre multiplier sequences remains open. Using computational techniques as in Section \ref{cubics} quickly turns intractable with the increasing number of parameters. In addition, one has to judiciously select ``test polynomials'' in order for this method to succeed succinctly. The polynomials
\[
p(n,k)=x^k \Le{n}
\] 
mimic properties of the test polynomials $(1+x)^n$ for classical multiplier sequences in that they have zeros of high multiplicity away from the zeros of the basis polynomials. As such, we were able to use just a couple test polynomials to demonstrate the non-existence of cubic Legendre multiplier sequences. On the downside, the degrees of these polynomials are high and we believe that the degrees of the test polynomials would have to increase if one would want to eliminate sequences interpolated by higher order polynomials. 
\subsection{Monotone operators} We call an operator $\displaystyle{T=\sum_{k=0}^{\infty}T_k(x)D^k}$ monotone if $\displaystyle{\deg T_k(x) \geq \deg T_{k-1}(x)}$ for all $k=1,2,\ldots$. The operator corresponding to the linear Legendre sequence $\seq{k+c}$ is given by 
\[
T=c+xD-\frac{1}{3}D^2+\frac{2}{15}xD^3+\sum_{k=4}^{\infty}T_k(x)D^k,
\]
whereas the operator corresponding to the Legendre sequence $\seq{k^2+\alpha k+\beta}$, $\alpha \neq 1$ is given by 
\begin{eqnarray*}
T&=&\beta+(1+\alpha)xD-\left(\frac{2+\alpha-3x^2}{3}\right)D^2+\frac{2}{15}(\alpha-1)xD^3-\frac{(\alpha-1)(1+4x^2)}{105}D^4\\
&+&(\alpha-1)\sum_{k=5}^{\infty} T_k(x)D^k.
\end{eqnarray*}
Neither sequence is a Legendre multiplier sequence, and neither operator is monotone. We believe these facts to be related, and give the following 
\begin{conjecture} Suppose $\displaystyle{T=\sum_{k=0}^{\infty}T_k(x)D^k}$ is an infinite order differential operator. If $T$ is not monotone, then $T$ is not reality preserving.
\end{conjecture}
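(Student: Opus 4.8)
The plan is to argue the contrapositive via the Borcea--Br\"and\'en symbol criterion (Theorem \ref{symbol}), reducing reality preservation to an analytic condition on $G_T$ and then reading the degree profile $k \mapsto \deg T_k(x)$ off of $G_T$. First I would dispose of the finite-rank alternative: an operator of genuinely infinite order whose image is not confined to a two-dimensional subspace cannot satisfy case (1) of Theorem \ref{symbol}, so for such $T$ reality preservation is \emph{equivalent} to $G_T(x,y) \in \mathcal{L-P}_2(\R)$ or $G_T(-x,y) \in \mathcal{L-P}_2(\R)$. The next step is the key identity. Writing $T=\sum_k T_k(x)D^k$ and using $D^k x^n = \frac{n!}{(n-k)!}x^{n-k}$, a direct resummation of the defining series for $G_T$ gives
\[
G_T(x,y)=\sum_{n=0}^{\infty}\frac{(-1)^n T(x^n)}{n!}y^n = e^{-xy}\sum_{k=0}^{\infty}(-1)^k T_k(x)\,y^k =: e^{-xy}F(x,y),
\]
with the reflected symbol being $e^{xy}\sum_k (-1)^k T_k(-x)y^k$. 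Note that, specialized at $x=0$, this recovers exactly the constant term $f(y)=\sum_k(-1)^kT_k(0)y^k$ exploited in Section \ref{linears}.

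The crucial reduction is then to peel off the exponential. Since a function in $\mathcal{L-P}_2(\R)$ restricts, on any real value $y=y_0$ of its second variable, to a function of $x$ in $\mathcal{L-P}$ (real stability is preserved under fixing a variable to a real constant, and a univariate real stable function is real-rooted), we get $G_T(x,y_0)\in\mathcal{L-P}$ for every real $y_0$. But $G_T(x,y_0)=e^{-xy_0}F(x,y_0)$, and $e^{-xy_0}=e^{bx}$ with $b=-y_0\in\R$ is a zero-free factor of the Laguerre--P\'olya form; hence $e^{bx}\psi\in\mathcal{L-P}\iff\psi\in\mathcal{L-P}$, and the same reflection argument handles $G_T(-x,y)$. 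In either case of Theorem \ref{symbol} I therefore obtain the clean necessary condition: \emph{reality preservation forces} $F(x,y_0)=\sum_k(-1)^kT_k(x)y_0^k\in\mathcal{L-P}$ \emph{in} $x$ \emph{for every real} $y_0$, and symmetrically $F(x_0,y)\in\mathcal{L-P}$ in $y$ for every real $x_0$. Thus $F$ is real-rooted on every horizontal and vertical real slice, a condition entirely stripped of the diagonal exponential.

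It then remains to extract degree monotonicity from this slice condition, and here is where I expect to do the real work. The monotonicity of $d_k:=\deg_x T_k$ is exactly the statement that the right boundary $k\mapsto d_k$ of the support $\{(p,k):[x^p]T_k\neq 0\}$ of $F$ is non-decreasing; a drop $d_{k_0}<d_{k_0-1}$ manifests as a forbidden positive-slope feature of that boundary, and equivalently as a coefficient gap in the column $S_m(y):=\sum_k(-1)^k\big([x^m]T_k\big)y^k$ at the top degree $m=d_{k_0-1}$. My plan is to combine three tools: the Newton/Laguerre inequalities $L_1\!\left(x,\partial_x^{p-1}F(\cdot,y_0)\right)\ge 0$, valid for all real $y_0$ by Theorem \ref{cvish}; the sign alternation of Lemma \ref{oppsign} applied to the one-variable edge polynomials obtained as tropical rescaling limits $R\to\infty$ of the slice functions $F(Rx,s/R)$ (whose surviving terms are indexed by the maximizers of $d_k-k$, i.e.\ by an edge of the Newton polygon); and the non-negativity constraints inherited from the classical multiplier sequence structure (via Theorem \ref{Polya} and Lemma \ref{oppsign} as used in Section \ref{cubics}). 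The goal is to show that a boundary drop forces an edge polynomial with a coefficient pattern $c_{p-1}c_{p+1}<0$ that cannot coexist with real-rootedness of the relevant slices.

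The hard part will be bridging the gap between \emph{slice-wise} membership in $\mathcal{L-P}$ and a purely combinatorial statement about $\deg_x T_k$: the horizontal-slice condition is strictly weaker than $G_T\in\mathcal{L-P}_2(\R)$, so I cannot simply invoke coefficient extraction to conclude $S_m\in\mathcal{L-P}$ directly, precisely because the factor $e^{-xy}$ smears the support diagonally and entangles the columns $S_p$ under single-variable coefficient extraction. Making the tropical edge-polynomial argument rigorous for the entire (infinite-support) function $F$, and proving that the edge polynomial produced by a degree drop is genuinely non-real-rooted for \emph{all} admissible leading coefficients rather than merely generically, is the essential obstacle. A final caveat to resolve is the degenerate finite-rank regime: operators such as $T(P)=P(0)+P'(0)x$ are reality preserving yet have $\deg T_1=-\infty<\deg T_0$, so the statement should be understood for infinite-\emph{rank} operators, or under a convention for $\deg 0$ that excludes such cases.
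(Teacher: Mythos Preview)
The statement you are attempting to prove is listed in the paper as Conjecture~1 in the section on open problems; the authors do not prove it, they merely motivate it by the two non-monotone examples preceding the conjecture. There is therefore no ``paper's own proof'' to compare your attempt to.

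As for the proposal itself: the reduction you give is valid as far as it goes. The identity $G_T(x,y)=e^{-xy}\sum_{k}(-1)^kT_k(x)y^k$ is correct, and peeling off $e^{-xy_0}$ on each horizontal real slice is legitimate since multiplication by $e^{bx}$ with $b\in\R$ is an automorphism of $\mathcal{L-P}$. So you do obtain the necessary condition that $F(x,y_0)=\sum_k(-1)^kT_k(x)y_0^k\in\mathcal{L-P}$ for every real $y_0$.

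However, what you call ``the real work'' is the entire content of the conjecture, and you have not supplied it. The sketch involving tropical rescalings $F(Rx,s/R)$ and edge polynomials is not a proof: for an infinite-order operator the support of $F$ in the $y$-direction is unbounded, so the Newton-polygon picture and the limiting edge polynomial need not exist or need not inherit real-rootedness in any controlled way; and even when an edge polynomial can be extracted, Lemma~\ref{oppsign} only yields information when a coefficient \emph{vanishes}, whereas a drop $\deg T_{k_0}<\deg T_{k_0-1}$ produces a missing monomial in a two-variable array, not a zero coefficient in any single edge polynomial you have identified. You also correctly flag, but do not resolve, the issue that slice-wise membership in $\mathcal{L-P}$ is strictly weaker than membership in $\mathcal{L-P}_2(\R)$; so even if monotonicity could be read off $\mathcal{L-P}_2(\R)$ membership directly, your argument has already discarded information that may be essential. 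In short, you have rederived a useful necessary condition and formulated the right question about it, but the conjecture remains open after your proposal exactly as it was before.
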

\noindent Should this conjecture be true, one could then try to prove that if $\seq{\gamma_k}=\seq{p(k)}$ where $\deg p$ is odd, and $\seq{\gamma_k}$ is a Legendre sequence, then the operator corresponding to the sequence $\seq{\gamma_k}$ is an infinite order differential operator which is not monotone.
\subsection{Using the symbol of the operator} Our approach used in Section \ref{linears} could be extended to treat sequences interpolated by higher order polynomials. Piotrowski (\cite{andrzej}) and Forg\'acs and Piotrowski (\cite{tomandrzej}) give explicit representations of the coefficient polynomials $T_k(x)$ of classical, and Hermite diagonal operators respectively. In both cases the $T_k(x)$s are given in terms of the reverses of the Jensen polynomials associated to the sequence $\seq{\gamma_k}$. If a sequence $\seq{\gamma_k}$ is interpolated by a polynomial, then only finitely many of these reverse Jensen polynomials are non-zero. This means that an analog of Theorem \ref{linear} would need the identification of only finitely many sequences, one for each inverse Jensen polynomial involved in the $T_k(x)$s, in order to explicitly determine the sequence $\seq{T_k(0)}$. With this sequence in hand, one could carry out steps analogous to those in Section \ref{linears} to establish the non-existence of Legendre multiplier sequences interpolated by polynomials of degree greater than three. 

\medskip 

\noindent {\bf Acknowledgement.} \ We would like to thank George Csordas for many stimulating discussions and guiding insights, and the anonymous referee for numerous suggestions improving the exposition and streamlining the proofs of Lemma 2 and Theorem 5.

\bigskip

\noindent ${}^{\dag}$  Department of Mathematics\\
5245 N. Backer Ave, M/S PB 108\\
California State University, Fresno 93740-8001

\bigskip

\noindent ${}^{\ddag}$  UR Mathematics\\
915 Hylan Building\\
University of Rochester, RC Box 270138\\
Rochester, NY 14627

\bigskip

\noindent ${}^{\S}$  Department of Mathematics and Statistics\\
College of Sciences and Mathematics\\
Auburn University\\
221 Parker Hall\\
Auburn, AL 36849

\bigskip

\noindent ${}^{\star}$ Mathematics Department \\
Davidson College\\
Box 7129\\
Davidson, NC 28035


\begin{thebibliography}{1}

\bibitem{Blakeman} {\it K. Blakeman, E. Davis, T. Forg\'acs,} and {\it K. Urabe}, On Legendre multiplier sequences, Missouri J. Math. Sci. \textbf{24}, (2012), 7-23. 

\bibitem{bb} {\it J. Borcea} and {\it P. Br\"and\'en}, Multivariate P\'olya-Schur classification problems in the Weyl algebra, Proc. London Math. Soc., {\bf 101} (2010), 73-104.

\bibitem{branden} {\it P. Br\"and\'en}, The Lee-Yang and P\'olya-Schur programs. III. Zero-preservers on Bargmann-Fock spaces, Amer. J. Math, to appear. 

\bibitem{bo}{\it P. Br\"and\'en} and {\it E. Ottergren}, A characterization of multiplier sequences for generalized Laguerre bases, Constructive Approximation, to appear.

\bibitem{cvarga} {\it G. Csordas} and {\it R.S. Varga}, Necessary and sufficient conditions and the Riemann Hypothesis, Adv. in Appl. Math., {\bf 11}, (1990), 328-357.

\bibitem{cv} {\it G. Csordas} and {\it A. Vishnyakova}, The generalized Laguerre inequalities and functions in the Laguerre-P\'olya class, Cent. Eur. J. Math. {\bf 11}, 2013, 1643-1650.

\bibitem{tomandrzej} {\it T. Forg\'acs} and {\it A. Piotrowski}, Hermite multiplier sequences and their associated operators. arXiv:1312.6187 [math.CV]

\bibitem{tomandrzej2} {\it T. Forg\'acs} and {\it A. Piotrowski},  Multiplier sequences for generalized Laguerre bases, The Rocky Mountain J. of Math., {\bf 43}, 2013, 1141-1159. 

\bibitem{levin} {\it B. Ja. Levin}, Distributions of Zeros of Entire Functions, Transl. Math. Mono. {\bf (5)}, Amer. Math. Soc., Providence, RI 1964; revised ed. 1980.

\bibitem{obre} {\it N. Obreschkoff}, Verteilung und Berechnung der Nullstellen Reeller Polynome, Veb Deutscher Verlag der Wissenschaften, Berlin, 1963.

\bibitem{andrzej} {\it A. Piotrowski}, Linear Operators and the Distribution of Zeros of Entire Functions, Ph.D. Dissertation. University of Hawai\textquoteleft i, 2007.

\bibitem{Polya} {\it G. P\' olya} and {\it J. Schur}, \"{U}ber zwei Arten von Faktorenfolgen in der Theorie der algebraischen Gleichungen, J. Reine Angew. Math. \textbf{144}, (1914), 89-113. 

\bibitem{rainville} {\it E.D. Rainville}, Special Functions, The Macmillan Company, New York, 1960.


\bibitem{yoshi} {\it R. Yoshida}, Linear and non-linear operators, and the distribution of zeros of entire functions, Ph.D. dissertation. University of Hawai\textquoteleft i, 2013. 

\end{thebibliography}
\end{document}